\newcommand{\Si}{\Sigma}
\newcommand{\be}{\begin{equation}}
\newcommand{\ee}{\end{equation}}
\newcommand{\bes}{\begin{equation*}}
\newcommand{\ees}{\end{equation*}}
\renewcommand{\to}{\rightarrow}
\newcommand{\Ric}{\mathrm{Ric}}
\theoremstyle{plain}
\newtheorem{lemma}{Lemma}[section]
\newtheorem{proposition}[lemma]{Proposition}
\newtheorem{theorem}[lemma]{Theorem}
\newtheorem{Theorem}{Theorem}
\newtheorem{Corollary}[Theorem]{Corollary}
\newtheorem{conjecture}[Theorem]{Conjecture}
\numberwithin{equation}{section}
\theoremstyle{remark}
\theoremstyle{definition}
\newtheorem{remark}[lemma]{Remark}
\newtheorem{Definition}[Theorem]{Definition}
\DeclareMathOperator{\tr}{tr} 
\DeclareMathOperator{\Div}{div}
\DeclareMathOperator{\da}{d\sigma}
\DeclareMathOperator{\Ker}{Ker}
\DeclareMathOperator{\Int}{Int}
\DeclareMathOperator{\C}{\mathcal{C}}
\begin{document}

\title[New asymptotically flat static vacuum metrics]
%Remarks on ``Existence of static vacuum extensions'']{
{New asymptotically flat static vacuum metrics with near Euclidean boundary data}
%Remarks on ``Existence of static vacuum extensions with  prescribed Bartnik boundary data"}

\author{Zhongshan An and Lan-Hsuan Huang}

\address{Department of Mathematics, University of Connecticut, Storrs, CT 06269, USA}
\email{zhongshan.an@uconn.edu}
\email{lan-hsuan.huang@uconn.edu}

\date{\today}

\begin{abstract}
In our prior work toward  Bartnik's static vacuum extension conjecture for near Euclidean boundary data, we establish a sufficient condition, called static regular, and confirm large classes of boundary hypersurfaces are static regular. In this note, we further improve some of those prior results. Specifically, we show that any hypersurface in an open and dense subfamily of a certain general smooth one-sided family of hypersurfaces  (not necessarily a foliation) is static regular. The proof uses some of our new arguments motivated from studying the conjecture for boundary data near an arbitrary static vacuum metric.  
\end{abstract}

\maketitle

\section{Introduction}

Let $n\ge 3$ and $(M, g)$ be an $n$-dimensional Riemannian manifold. We say that $(M, g)$ is \emph{static vacuum} (or $g$ is a \emph{static vacuum metric} on $M$) if there is a scalar-valued function $u$ on $M$ satisfying
\begin{align}\label{eq:static}
\begin{split}
	-u\mathrm{Ric}_g + \nabla^2_g u&=0\\
	\Delta_g u&=0.
\end{split}
\end{align}	
Such $u$ is called a \emph{static potential}. The class of static vacuum metrics has played a fundamental role in general relativity because when $u>0$, the triple $(M, g, u)$ gives rise to a Ricci flat spacetime $(\mathbb R\times M, -u^2 dt^2 + g)$ that has a global Killing vector field $\partial_t$. 

%Let the number $q\in (\frac{n-2}{2}, n-2)$. We say that $(M, g)$ is \emph{asymptotically flat} (at the rate $q$) if $M\setminus  B$ is diffeomorphic to $\mathbb{R}^n \setminus B_1$ where $B, B_1$ are compact subsets and that  the metric $g$ has the asymptotics $g_{ij} =\delta_{ij} + O(|x|^{-q})$ with respect to the pull-back Cartesian coordinates $\{ x_1, \dots, x_n\}$ on $M\setminus K$.

A very important example of asymptotically flat, static vacuum metrics is the the family of  (Riemannian) Schwarzschild metrics $g_m$:
 \begin{align*}
 	g_m &= \left(1-\tfrac{2m}{r^{n-2}}\right)^{-1} dr^2 + r^2 g_{S^{n-1}} \quad \mbox{ defined on }  \mathbb{R}^n\setminus B_{(2m)^{\frac{1}{n-2}}},
\end{align*} 
with the static potential $u_m = \sqrt{1-\tfrac{2m}{r^{n-2}}}$, where $g_{S^{n-1}} $ is the standard metric on the unit sphere $S^{n-1}$. Note that the Schwarzschild metrics are rotationally symmetric. When $m=0$, the Schwarzschild metric becomes the Euclidean metric. When $m>0$, the Schwarzschild manifold has a minimal hypersurface boundary, precisely at $u=0$. In fact, the Schwarzschild metrics are the only asymptotically flat, static vacuum 3-manifolds with such property, by the celebrated Uniqueness Theorem of Static Black Holes. See \cite{Israel:1968, Robinson:1977, Bunting-Masood-ul-Alam:1987}. Another family of static vacuum, exact solutions was discovered by H.~Weyl. The Weyl solutions are axially symmetric and have general asymptotics at infinity, but a subclass of them can have asymptotically flat end. Those exact solutions can be characterized by certain conditions (e.g. having black hole boundary), and great efforts have been made toward the uniqueness and classification results of those static vacuum metrics. See, for example, M. Reiris and J.~Peraza~\cite{Reiris:2019}  and the references therein. 

In contrast, Robert Bartnik conjectured the following ``prescribing boundary value'' problem for asymptotically flat, static vacuum manifolds~\cite[Conjecture 7]{Bartnik:2002}.\footnote{Note that the original conjecture was  stated for $n=3$, $\Omega= B^3$, and $M=\mathbb R^3\setminus B^3$.} The conjecture was originated from his quasi-local mass program in 1989, for which we refer the reader to  the  survey article of M.~Anderson \cite{Anderson:2019} for details. The conjecture itself is also of independent interest as a natural geometric PDE boundary value problem. Furthermore, progress toward this conjecture would give rise to new examples of asymptotically flat, static vacuum metrics and advance our understanding toward the structure of static vacuum metrics. 

\begin{conjecture}[Static Extension Conjecture]\label{static}
Let $(\Omega, g_0)$ be a compact manifold with scalar curvature $R_{g_0}\ge 0$. Suppose the mean curvature $H_{g_0}$ is positive somewhere on the boundary $\Sigma$. Then there exists  a unique asymptotically flat, static vacuum manifold $(M, g)$  with boundary $\partial M \cong \Sigma$ satisfying
\begin{align*}
	\begin{array}{ll}	g_0^\intercal = g^\intercal \\
	H_{g_0} = H_g 
	\end{array} \quad \mbox{ on } \Sigma. 
\end{align*}
Here, $(\cdot)^\intercal$ denotes the restriction on the tangent bundle of $\Sigma$. 
\end{conjecture}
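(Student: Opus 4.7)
The plan is to attack the conjecture via linearization at a known solution combined with a continuity method, with the understanding that Conjecture~\ref{static} is famously open and the strategy below is the natural program rather than a complete proof. Fix a boundary model $\Sigma$ and a diffeomorphism type of exterior region $M$, and consider the Bartnik boundary data map
\begin{equation*}
\mathcal{B}(g) = (g^\intercal, H_g)
\end{equation*}
on the space of asymptotically flat static vacuum metrics on $M$, modulo diffeomorphisms fixing $\Sigma$ pointwise. The first step is to compute $D\mathcal{B}$ at the Euclidean exterior in appropriate weighted H\"older/Sobolev spaces, impose an explicit gauge (of the type used in the authors' \emph{static regular} framework referenced in the abstract), and verify that the gauge-fixed linear operator is Fredholm of index zero with trivial kernel. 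The implicit function theorem then delivers existence and local uniqueness whenever $(g_0^\intercal, H_{g_0})$ is close to the Euclidean boundary data --- precisely the regime in which the authors' prior and present results operate.

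The second step is a continuity method to reach arbitrary admissible data. Choose a smooth path $(\gamma_t,h_t)_{t\in[0,1]}$ of boundary data connecting the Euclidean data at $t=0$ to $(g_0^\intercal,H_{g_0})$ at $t=1$, and set
\begin{equation*}
T=\{t\in[0,1]: \text{a static vacuum extension with data }(\gamma_t,h_t)\text{ exists}\}.
\end{equation*}
Openness follows from the local step provided one can verify static regularity along the path. Closedness requires uniform a priori estimates: weighted decay at infinity (controlled by a positive-mass-type quantity), interior elliptic estimates for the static vacuum system, and boundary regularity up to $\Sigma$, together with a strict lower bound on the static potential $u$ to prevent collapse toward a minimal boundary. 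Uniqueness should follow from a Bunting--Masood-ul-Alam style doubling across $\Sigma$ combined with a positive mass theorem with corners, which forces any two candidate extensions to be isometric.

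The hard part will be closedness of $T$ beyond the perturbative regime. Two failure modes must be excluded: degeneration of $u$ toward zero (which would manufacture a horizon inside the putative extension, violating the exterior hypothesis) and loss of asymptotic flatness in the limit. Excluding the former appears to require a quantitative interaction between the boundary condition $H_{g_0}>0$ and the sign of $u$, while excluding the latter seems to demand a boundary version of the positive mass theorem that produces ADM mass bounds in terms of Bartnik data. Both steps are presently out of reach in full generality, which is why the abstract restricts attention to enlarging the static regular class rather than settling Conjecture~\ref{static} outright; the present note's contribution is precisely to widen the family of boundaries for which the linearized step of the program succeeds.
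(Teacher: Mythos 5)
This statement is a \emph{conjecture}, not a theorem of the paper: the authors give no proof of it, and in fact they record (following Anderson--Jauregui) that Conjecture~\ref{static} is expected to be \emph{false} as stated, with inner-embedded hypersurfaces in $\mathbb{R}^n$ as candidate counterexamples even when $H_{\bar g}>0$ everywhere. So there is nothing in the paper to compare your argument against, and your proposal --- which you candidly present as a program rather than a proof --- cannot be accepted as a proof of the statement. The perturbative first step you describe is essentially what the paper's framework (Theorem~\ref{th:static} together with the static regular condition) accomplishes, so that part is consistent with the literature. But the global continuity method has concrete, fatal gaps. Closedness of your set $T$ has no mechanism behind it: there is no known a priori bound keeping the static potential $u$ away from zero or controlling the ADM mass along a path of boundary data, and if the Anderson--Jauregui candidates are genuine counterexamples then closedness must actually fail for some paths, so no amount of estimates can rescue the argument in full generality.

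Two further specific problems. First, your openness step requires verifying static regularity (triviality of the linearized kernel) at every solution along the path, not just at the Euclidean background; the paper's results only establish this for an open dense subfamily of near-Euclidean boundaries, and the companion work \cite{An-Huang:2022} at general static vacuum backgrounds still yields only sufficient conditions, not unconditional kernel triviality. Second, the uniqueness argument via Bunting--Masood-ul-Alam doubling plus a positive mass theorem with corners does not apply here: that reflection technique requires the totally geodesic (or $u=0$) boundary condition of the black-hole uniqueness setting, whereas Bartnik data $(g_0^\intercal,H_{g_0})$ with $H_{g_0}>0$ somewhere gives no reflection symmetry, and gluing two candidate extensions along $\Sigma$ with matched induced metric and mean curvature produces a corner but no contradiction with positivity of mass. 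The uniqueness the paper actually obtains (in Theorem~\ref{th:static}) is only \emph{local}, in a $\C^{2,\alpha}_{-q}$-neighborhood of $(\bar g,1)$ and under explicit gauges, which is a far weaker statement than the global uniqueness the conjecture asserts.
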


\noindent{\bf Convention:} The mean curvature $H_g$ of a hypersurface $\Sigma $ in a Riemannian manifold $(M, g)$ is defined as $H_g = \Div_g \nu$, where $\nu$ is the unit normal vector of $\Sigma$. When $(M, g)$ is asymptotically flat, we choose $\nu$ to point to infinity (and thus the unit normal for $\Sigma$ in $(\Omega, g_0)$ points outward). 

We shall refer to the geometric boundary data $(g_0^\intercal, H_{g_0})$ as the \emph{Bartnik boundary data}.   Let us also remark on the assumption that  $H_{g_0}$ is positive somewhere. The conjecture would fail without this assumption because such extension, if exist, would contain a minimal hypersurface homologous to the boundary (at least in dimensions $n\le 7$), and the extension must be Schwarzschild by Uniqueness Theorem, which put strong restriction on $g_0^\intercal$. See P. Miao~\cite{Miao:2005} for $n=3$. For $n\le 7$, by minimal surface theory, there is an outermost minimal hypersurface homologous to the boundary. By the result of D. Martin, Miao, and the second author \cite[Theorem 1]{Huang-Martin-Miao:2018}, the static potential $u=0$ on the outmost minimal hypersurface. From there, one applies the generalization of Uniqueness of Static Black holes in higher dimensions by G. Gibbons, D. Ida, and T. Shiromizu~\cite{Gibbons-Ida-Shiromizu:2002}. 
  
 Even with the mean curvature assumption, it is highly speculated that Conjecture~\ref{static} does not hold in general as stated.  Let $\Omega$ be a bounded open subset in $\mathbb R^n$. Observed by \cite{Anderson-Khuri:2013, Anderson-Jauregui:2016}, if the boundary $\Sigma=\partial \overline{\Omega}$  is only \emph{inner} embedded, i.e., $\Sigma$ touches itself from the exterior region $\mathbb{R}^n\setminus \Omega$, the induced data $(\bar{g}^\intercal, H_{\bar{g}})$ is valid Bartnik boundary data, but $(\bar{g}, 1)$ in $\mathbb{R}^n\setminus \Omega$ is \emph{not} a valid static vacuum extension as $\mathbb{R}^n\setminus \Omega$ is not a manifold with boundary. One can further arrange so that the mean curvature $H_{\bar g}$ is positive everywhere. Those inner embedded hypersurfaces are conjectured to be counter-examples to Conjecture~\ref{static} by \cite[Conjecture 5.2]{Anderson-Jauregui:2016} (see also \cite{Anderson:2019}), though it is not clear whether there could be another static vacuum extension far way from $(\mathbb R^n\setminus \Omega, \bar g, 1)$. Nevertheless, positive results to  Conjecture~\ref{static}, under suitable assumptions, will provide a  structure theory for the space of static vacuum metrics (parametrized by their Bartnik boundary data). It also connects the fundamental problem on isometric embeddings of hypersurfaces into a static vacuum manifold with prescribed mean curvature. In particular, that question apparently has intriguing connections to the work of P.-N. Chen, M.-T. Wang, Y.-K. Wang, and S.-T. Yau~ \cite{Chen-Wang-Wang-Yau:2018} where the notion of quasi-local energy defined via isometric embeddings into a reference static metric is proposed, extending the celebrated Wang-Yau quasi-local mass with respect to the Minkowski spacetime~\cite{Wang-Yau:2009}.

There are some positive results toward Conjecture~\ref{static}.  The existence and local uniqueness is proven for $n=3$ and for $(g_0, H_{g_0})$  sufficiently close to the induced Bartnik boundary data on a round sphere from the Euclidean metric, i.e. $(g_0, H_{g_0})$ sufficiently close to $(g_{S^2}, 2)$. See Miao~\cite{Miao:2003}, Anderson-Khuri \cite{Anderson-Khuri:2013}, and  Anderson~ \cite{Anderson:2015}. In recent work~\cite{An-Huang:2021}, we give a general framework to tackle Conjecture~\ref{static} and  confirm existence and local uniqueness of Conjecture~\ref{static} for large classes of boundary data, including those close to the induced boundary data on either any star-shaped hypersurfaces or quite general perturbed hypersurfaces in the Euclidean space. In this present note, we improve Theorem 7 in \cite{An-Huang:2021}, by employing new arguments in our recent work \cite{An-Huang:2022}. The new results are presented as  Theorem~\ref{th:one-sided}, Corollary~\ref{co}, and Theorem~\ref{th:trivial} below.

To describe the new results, we first recall the basic notations and definitions and review relevant results from \cite{An-Huang:2021}. 

  Let  $\Omega$ be a bounded open subset in $\mathbb{R}^n$ whose boundary $\Sigma = \partial \overline{\Omega}$ is a connected, embedded smooth hypersurface in $\mathbb{R}^n$.  We denote by $\bar{g}$ the Euclidean metric in $\mathbb{R}^n$ with $\bar g_{ij} = \delta_{ij}$ (with respect to a fixed Cartesian coordinate chart). Our analytic framework is based on the weighted H\"older spaces $\C^{k,\alpha}_{-q} (\mathbb R^n\setminus \Omega)$ (see its definition in Section 2.1 of \cite{An-Huang:2021}), and we always assume the H\"older exponent $\alpha\in(0,1)$ and the fall-off rate $q\in (\frac{n-2}{2} , n-2)$ for asymptotical flatness. We denote by $D\Ric|_{\bar{g}} (h)$ the linearization of the Ricci curvature at $\bar{g}$; namely, let $g(t)$ be an arbitrary family of Riemannian metrics on $\mathbb{R}^n\setminus \Omega$ so that $g(0) = \bar g$ and $g'(0) = h$, then $D\Ric|_{\bar{g}}(h) := \left.\frac{d}{dt}\right|_{t=0} \Ric_{g(t)}$.  Similarly, we define the linearizations of   the mean curvature and second fundamental form on $\Sigma$ by  $DH|_{\bar{g}}(h)$ and  $DA|_{\bar{g}}(h)$, respectively. We will omit the subscript $\bar g$ in those linearizations when the context is clear. 

\begin{Definition}\label{definition:static-regular}
The boundary $\Sigma$ is said to be \emph{static regular in  $\mathbb R^n\setminus\Omega$} if  for any  pair of a symmetric $(0,2)$-tensor $h$ and a scalar-valued function $v$ satisfying $(h, v)\in  \C^{2,\alpha}_{-q}(\mathbb{R}^n\setminus \Omega)$ and
\begin{align}\label{eq:static-bdry}
\begin{split}
-D\Ric (h)+ \nabla^2 v&=0,  \quad  \Delta v=0 \quad \mbox{ in } \mathbb{R}^n\setminus \Omega,\\
h^\intercal&=0, \quad DH(h)=0\quad \mbox{ on } \Sigma,
\end{split}
\end{align}
we must have $DA(h)=0$ on~$\Sigma$. 
\end{Definition}

The following fundamental result obtained  in \cite{An-Huang:2021} says that ``static regular'' is a sufficient condition for existence and local uniqueness. 
\begin{Theorem}[{\cite[Theorem 3]{An-Huang:2021}}]\label{th:static}
Suppose the boundary $\Sigma$ is static regular in $\mathbb{R}^n\setminus \Omega$. Then there exist positive constants $\epsilon_0, C$ such that for each $\epsilon \in (0, \epsilon_0)$, if $(\tau, \phi)$ satisfies~$\| (\tau,\phi)- (\bar{g}^\intercal, H_{\bar{g}}) \|_{\C^{2,\alpha}(\Sigma)\times \C^{1,\alpha}(\Sigma)}<\epsilon$, then there exists an asymptotically flat  pair $(g, u)$ with $\|(g, u) - (\bar{g}, 1) \|_{\C^{2,\alpha}_{-q}(\mathbb{R}^n \setminus \Omega)} < C\epsilon$ such that $(g, u)$ is a static vacuum pair in $\mathbb{R}^n\setminus \Omega$ having the Bartnik boundary data $(g^\intercal, H_g) = (\tau, \phi)$ on~$\Sigma$.

Furthermore, the solution $(g, u)$ is geometrically unique in a neighborhood $\mathcal U$ of $(\bar g, 1)$ in the $\C^{2,\alpha}_{-q}(\mathbb{R}^n\setminus \Omega)$-norm.
\end{Theorem}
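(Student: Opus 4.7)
The plan is to recast Theorem~\ref{th:static} as the invertibility of a nonlinear map between weighted H\"older spaces and apply the implicit function theorem at $(\bar g,1)$. The immediate obstruction is diffeomorphism gauge invariance: the static vacuum system paired with the Bartnik boundary conditions has a kernel spanned by Lie derivatives along vector fields vanishing on $\Sigma$. To kill this, I would introduce a DeTurck-type gauge, augmenting the Ricci equation by a term $\tfrac12 \mathcal L_{X(g,\bar g)}g$, where $X^k = g^{ij}\bigl(\Gamma_{ij}^k(g) - \Gamma_{ij}^k(\bar g)\bigr)$. The resulting map
\[
\mathcal F(g,u) = \bigl( -u\,\mathrm{Ric}_g + \nabla_g^2 u - \tfrac12 \mathcal L_{X(g,\bar g)}g,\ \Delta_g u,\ g^\intercal,\ H_g \bigr)
\]
is a smooth nonlinear map from a neighborhood of $(\bar g,1)$ in $\C^{2,\alpha}_{-q}(\mathbb R^n\setminus\Omega)$ into the natural product target, and one must show its derivative at $(\bar g,1)$ is a Banach isomorphism.

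For the interior, DeTurck's trick makes the top-order symbol of the gauged Ricci equation elliptic, and coupled with $\Delta v=0$ and the boundary pair $(h^\intercal, DH(h))$ one obtains an elliptic boundary value problem in the Lopatinski--Shapiro sense. Standard weighted theory on asymptotically flat manifolds then yields a Fredholm operator of index zero, so surjectivity will follow from injectivity. If $(h,v)$ lies in the kernel, writing $Y$ for the linearization of $X$ at $\bar g$, the pair $(h-\tfrac12\mathcal L_Y\bar g,\,v)$ satisfies precisely the ungauged linearized static system \eqref{eq:static-bdry}, with the same linearized Bartnik data since $Y|_\Sigma=0$. The static regular hypothesis of Definition~\ref{definition:static-regular} then yields $DA(h)=0$ on $\Sigma$. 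Combined with $h^\intercal=0$ and $DH(h)=0$, this means all linearized Cauchy data of $h$ on $\Sigma$ vanish; a unique continuation argument for the coupled elliptic gauged linearized system, together with the $\C^{2,\alpha}_{-q}$-decay at infinity, then forces $h\equiv 0$ and $v\equiv 0$.

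With the isomorphism established, the implicit function theorem produces the asserted $(g,u)$ with the $C\e$-bound. Two bookkeeping steps finish the proof: first, one recovers a genuine static vacuum pair by showing the gauge vector $X(g)$ actually vanishes on the solution, which follows from a contracted Bianchi identity giving $X$ an elliptic equation with zero Dirichlet data on $\Sigma$ and decay at infinity; second, geometric uniqueness comes from pulling back any nearby competing solution by a diffeomorphism equal to the identity on $\Sigma$ so as to satisfy the DeTurck gauge, after which IFT uniqueness applies. The step I expect to be hardest is the injectivity argument: passing from the boundary identity $DA(h)=0$ to interior vanishing of $(h,v)$ demands a careful unique continuation result for a coupled system on an asymptotically flat manifold, and the gauge bookkeeping that absorbs $\mathcal L_Y\bar g$ without spoiling the Bartnik boundary data has to be tracked precisely to ensure the static regular hypothesis really applies to the pair in question.
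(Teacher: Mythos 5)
This theorem is not proved in the present note; it is imported from \cite{An-Huang:2021}, whose proof runs along the broad lines you describe (gauge-fix, linearize, Fredholm theory in weighted H\"older spaces, implicit function theorem), using a static-harmonic gauge together with an additional \emph{orthogonal} gauge. Your outline is therefore the right genre of argument, but it contains a genuine gap at the decisive step: the gauged linearized operator is \emph{not} injective, so the chain ``index zero $+$ injective $\Rightarrow$ isomorphism $\Rightarrow$ IFT'' cannot close. Concretely, for any Euclidean Killing field $K$ let $X$ be the harmonic vector field on $\mathbb R^n\setminus\Omega$ with $X=0$ on $\Sigma$ and $X-K\to 0$ at infinity (solve the Dirichlet problem componentwise). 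Then $(h,v)=(L_X\bar g,0)$ lies in $\C^{2,\alpha}_{-q}$, satisfies the linearized gauge condition $\beta h=\Delta X=0$, solves the gauged linearized system, and has $h^\intercal=0$, $DH(h)=0$ on $\Sigma$ because $X$ vanishes there --- yet $h\neq 0$ unless $K=0$, since a nonzero Killing field cannot vanish on a hypersurface. So the kernel has dimension at least $n(n+1)/2$. Your unique-continuation step also conflates two notions of Cauchy data: the geometric conditions $h^\intercal=0$, $DA(h)=0$ imply the PDE Cauchy data $h=\nabla h=0$ on $\Sigma$ only after passing to the geodesic gauge (Lemmas~\ref{le:geodesic}--\ref{le:zero}), which is incompatible with the DeTurck gauge you imposed; and even then the correct conclusion (the Cauchy-data uniqueness theorem quoted at the start of Section~\ref{se:boundary}) is $h=L_X\bar g$ for some $X$ vanishing on $\Sigma$ and asymptotic to a Killing field, \emph{not} $h\equiv 0$.

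Because the kernel is genuinely nontrivial and consists of pure-gauge deformations, the actual argument must instead establish \emph{surjectivity} directly and accept a kernel. This is exactly where the static regular hypothesis is used in \cite{An-Huang:2021}: the cokernel of the linearized boundary value problem is identified, via the Green-type identity recalled in the proof of Theorem~\ref{th:normal}, with the boundary data $\big(vA+DA(h)-\nu(v)\bar g^\intercal,\,2v\big)$ of kernel elements $(h,v)$; static regularity forces $DA(h)=0$ (and $v=0$), so these obstructions vanish and the map is surjective. Existence then follows from an IFT for surjective maps with complemented kernel, and the residual kernel is precisely why the uniqueness statement is only \emph{geometric} --- the orthogonal gauge is needed to select a slice transverse to the diffeomorphism orbit, since the DeTurck gauge alone does not normalize away the Killing-asymptotic vector fields exhibited above. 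Your closing ``bookkeeping'' steps (recovering the ungauged equation from the contracted Bianchi identity, and gauge-adjusting competitors for uniqueness) are sound in spirit, but both rest on the isomorphism claim that fails.
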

We remark that the ``local uniqueness'' in the above theorem is precisely described  under  the static-harmonic gauge and the orthogonal gauge. Since we will not explicitly use them in the present paper, we refer the reader to the discussion right after Theorem~3 in \cite{An-Huang:2021}.

In \cite{An-Huang:2021}, we furthermore show that large classes of hypersurfaces in $\mathbb R^n$ are static regular. %We begin with convex surfaces in $\mathbb{R}^3$.
%\begin{Theorem}[{\cite[Theorem 6]{An-Huang:2021}}]
%Let  $\Omega$ be a bounded open subset in $\mathbb{R}^3$ whose boundary $\Sigma=\partial \overline{\Omega}$ has positive Gauss curvature. Then $\Sigma$ is static regular in~$\mathbb{R}^3\setminus \Omega$.
%\end{Theorem}
In particular, we show that static regular hypersurfaces are ``dense'' in the following concrete sense. A family of embedded hypersurfaces $\{ \Sigma_t\}\subset \mathbb{R}^n$ is said to form a \emph{smooth generalized foliation} if the deformation vector $X$ of $\{ \Sigma_t\}$ is smooth and on each $\Sigma_t$, $\bar g( X, \nu) =\zeta$ where $\zeta>0$  in a dense subset of $\Sigma_t$, and $\nu$ is the unit normal of $\Sigma_t$. In other words, $\{ \Sigma_t\}$ is slightly more general than a foliation in that the leaves can overlap on a nowhere dense subset.

\begin{Theorem}[{\cite[Theorem 7]{An-Huang:2021}}]\label{th:perturbation}
Let $\delta>0$, $t\in [-\delta, \delta]$, and each $\Omega_t\subset \mathbb{R}^n$ be a bounded open subset with hypersurface boundary $\Sigma_t = \partial\overline{ \Omega_t}$  embedded  in $\mathbb{R}^n$.  Suppose  the boundaries $\{ \Sigma_t\}$ form a smooth generalized foliation. Then  there is an open dense subset $J\subset (-\delta,\delta)$ such that  $\Sigma_t$  is static regular in $\mathbb{R}^n\setminus \Omega_t$ for all $t\in J$.
\end{Theorem}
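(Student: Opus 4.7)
The plan is to combine a Fredholm dimension count with a variational argument along the generalized foliation $\{\Si_t\}$. For each parameter $t$, consider
\[
K_t := \bigl\{(h,v)\in \C^{2,\alpha}_{-q}(\bR^n\setminus\O_t) : (h,v) \text{ satisfies \eqref{eq:static-bdry} on } \bR^n\setminus\O_t\bigr\}.
\]
By Definition~\ref{definition:static-regular}, $\Si_t$ is static regular precisely when the boundary trace map $(h,v)\mapsto DA(h)|_{\Si_t}$ vanishes on $K_t$. The linearized static vacuum system with vanishing Bartnik data is elliptic modulo the static-harmonic gauge, so by the Fredholm setup in \cite{An-Huang:2021}, $K_t$ is finite-dimensional. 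Extending $X$ to a smooth global vector field on $\bR^n$ and pulling back via its flow diffeomorphisms $\Phi_t$ to the fixed reference exterior domain $\bR^n\setminus\O_{t_0}$, one gets a smooth family of Fredholm operators. A standard argument for integer-valued upper-semicontinuous functions then shows that $d(t):=\Dim K_t$ is locally constant on an open dense subset $J\subset(-\d,\d)$.

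Fix $t_0\in J$ and select a smooth local frame $\{(h_t^{(i)},v_t^{(i)})\}_{i=1}^{d}$ of $K_t$ for $t$ near $t_0$. I would differentiate all defining relations at $t=t_0$. After the $\Phi_t$-pullback the interior equations are $t$-independent, so each derivative $(\dot h^{(i)},\dot v^{(i)})$ again solves the linearized static system on $\bR^n\setminus\O_{t_0}$. The crucial step is differentiating the boundary conditions $(h_t^{(i)})^\intercal = 0$ and $DH(h_t^{(i)}) = 0$, which hold on the moving hypersurface $\Si_t$ propagating with normal speed $\zeta := \bar{g}(X,\nu)\ge 0$. Using the standard Lie-derivative formulas for the variation of the induced metric and of the mean curvature along $\zeta\nu + X^\top$, one obtains boundary equations for $(\dot h^{(i)},\dot v^{(i)})$ whose inhomogeneous right-hand sides decompose into a term proportional to $\zeta\cdot DA(h_{t_0}^{(i)})$ plus lower-order terms intrinsic to $(h_{t_0}^{(i)},v_{t_0}^{(i)})$.

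The final step is to correct $(\dot h^{(i)},\dot v^{(i)})$ by adding a gauge Lie derivative $\mathcal L_Y \bar{g}$ for a vector field $Y$ supported near $\Si_{t_0}$, plus an element of $K_{t_0}$ (available because $d$ is locally constant), so that the corrected pair has boundary data whose only nontrivial part is $\zeta\cdot DA(h_{t_0}^{(i)})$. Since the corrected pair now lies in $K_{t_0}$ (modulo gauge), its residual boundary obstruction must vanish pointwise, yielding $\zeta\cdot DA(h_{t_0}^{(i)}) = 0$ on $\Si_{t_0}$. The generalized foliation hypothesis gives $\zeta>0$ on a dense subset of $\Si_{t_0}$, and since $DA(h_{t_0}^{(i)})$ is continuous, it vanishes identically, proving that $\Si_{t_0}$ is static regular. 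The main obstacle is the boundary variational computation: one must isolate the $\zeta\cdot DA(h)$ contribution cleanly from the Lie-derivative expansions, arrange $Y$ so that the correction preserves both asymptotic flatness and the static-harmonic gauge, and verify that every remaining boundary term on the right-hand side lies in the range of the boundary trace on $K_{t_0}$, so that it can be absorbed. This bookkeeping is exactly the analytic content that the orthogonal-gauge framework from \cite{An-Huang:2021} and its refinements in \cite{An-Huang:2022} are designed to handle.
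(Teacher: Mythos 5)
Your setup---pulling everything back to a fixed exterior domain via the flow of $X$, using upper semicontinuity of $\Dim\Ker L_t$ to obtain an open dense $J$ on which the kernel dimension is locally constant, choosing a differentiable family of kernel elements, and differentiating the boundary conditions to produce a new static vacuum deformation whose Bartnik boundary data is proportional to $\zeta\,DA(h)$---is exactly the paper's route (Proposition~\ref{pr:limit} together with the first half of the proof of Theorem~\ref{th:normal}, where the differentiated pair $(p-L_Xh,\,z-X(v))$ is shown to have boundary data $(-2\zeta DA(h),\ \zeta A\cdot DA(h))$). The gap is in your concluding step. You assert that after a gauge correction the differentiated pair ``lies in $K_{t_0}$ (modulo gauge), so its residual boundary obstruction must vanish pointwise.'' That is a non sequitur: a pair solving the interior linearized equations with Bartnik boundary data $-2\zeta DA(h)$ is not thereby in the kernel---membership in $K_{t_0}$ is precisely the statement that this boundary data vanishes, which is what you are trying to prove. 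Indeed, Theorem~\ref{th:static} shows the linearized boundary value problem is solvable for essentially arbitrary boundary data, so the mere existence of a solution with boundary data built from $\zeta\,DA(h)$ forces nothing. Moreover, adding an element of $K_{t_0}$ cannot ``absorb'' any boundary term, since kernel elements have zero Bartnik boundary trace by definition, so that part of your correction scheme is vacuous.

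What actually closes the argument is an integration-by-parts identity your proposal never invokes: the Green-type identity of \cite[Corollary 3.5]{An-Huang:2021}, which pairs a kernel element $(h,v)$ (having $h^\intercal=0$ and $DH(h)=0$ on $\Sigma$) against the Bartnik boundary data of any other static vacuum deformation $(k,w)$ via the term $\big(vA+DA(h)-\nu(v)\bar g^\intercal,\,2v\big)$. Taking $(k,w)=(p-L_Xh,\,z-X(v))$ and substituting the boundary data above yields $0=-\int_\Sigma 2\zeta\,|DA(h)|^2\,\da$, whence $DA(h)=0$ on $\{\zeta>0\}$; for a generalized foliation this set is dense in $\Sigma$, and continuity gives $DA(h)=0$ everywhere, i.e.\ static regularity. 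This sign-definite pairing, reflecting the formal self-adjointness of the linearized problem, is the essential mechanism; without it the $\zeta\,DA(h)$ term cannot be killed, and no amount of gauge bookkeeping substitutes for it.
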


The above theorem has the following strong consequence because of the dilation property of the Euclidean static vacuum pair.
\begin{Corollary}[{\cite[Corollary 8]{An-Huang:2021}}]
Let  $\Omega$ be a bounded open subset in $\mathbb{R}^n$ whose boundary $\Sigma=\partial \overline{\Omega}$ is a star-shaped hypersurface. Then $\Sigma$ is static regular in $\mathbb{R}^n\setminus \Omega$. 
\end{Corollary}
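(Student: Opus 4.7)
The plan is to combine Theorem~\ref{th:perturbation} with the dilation symmetry of the Euclidean static vacuum system. After translating, I may assume $\Omega$ is star-shaped with respect to the origin. I then set $\Omega_t := (1+t)\Omega$ and $\Sigma_t := (1+t)\Sigma$ for $t\in(-\delta,\delta)$ with $\delta<1$ small. The deformation vector of the family $\{\Sigma_t\}$ at a point $x\in\Sigma_t$ is essentially the position vector, so star-shapedness of $\Omega$ (with respect to the origin) forces $\bar g(X,\nu)\geq 0$ on $\Sigma_t$ with positivity on a dense subset. Hence $\{\Sigma_t\}_{t\in(-\delta,\delta)}$ is a smooth generalized foliation in the sense preceding Theorem~\ref{th:perturbation}.

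Applying Theorem~\ref{th:perturbation} to this foliation, I obtain an open and dense set $J\subset (-\delta,\delta)$ such that $\Sigma_t$ is static regular in $\mathbb R^n\setminus\Omega_t$ for every $t\in J$. Density of $J$ gives some $t_0\in J$ arbitrarily close to $0$; however, $0$ itself need not belong to $J$, so I cannot conclude directly that $\Sigma=\Sigma_0$ is static regular. To bridge this gap I invoke the dilation invariance of the problem: for any $\lambda>0$, the map $F_\lambda(x)=\lambda x$ is a conformal diffeomorphism of $(\mathbb R^n,\bar g)$ by a constant factor, under which the linearized static vacuum system \eqref{eq:static-bdry} is form-preserving.

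Concretely, given any pair $(h,v)\in\C^{2,\alpha}_{-q}(\mathbb R^n\setminus\Omega)$ satisfying \eqref{eq:static-bdry}, I set $\lambda:=1+t_0$ and define $\tilde h(y):=h(y/\lambda)$, $\tilde v(y):=v(y/\lambda)$ on $\mathbb R^n\setminus\Omega_{t_0}$. A quick scaling check shows $(\tilde h,\tilde v)\in\C^{2,\alpha}_{-q}(\mathbb R^n\setminus\Omega_{t_0})$ (the weight is preserved since the exterior region is still an end of $\mathbb R^n$), and using the homogeneity of each operator under Euclidean dilation —  $\nabla^2$, $\Delta$, and $D\Ric$ all scale by $\lambda^{-2}$, and the boundary restriction $(\cdot)^\intercal$, the mean curvature linearization $DH$, and the second fundamental form linearization $DA$ commute with $F_\lambda$ up to the appropriate factor of $\lambda$ — the rescaled pair $(\tilde h,\tilde v)$ solves the system \eqref{eq:static-bdry} on $\mathbb R^n\setminus\Omega_{t_0}$ with boundary conditions on $\Sigma_{t_0}$. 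Since $\Sigma_{t_0}$ is static regular, $DA(\tilde h)=0$ on $\Sigma_{t_0}$, and the same homogeneity then gives $DA(h)=0$ on $\Sigma$, proving $\Sigma$ is static regular.

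The only delicate point in this plan is step three, where I must verify cleanly that each ingredient of the static regular condition — the bulk equations, the boundary conditions, the decay class, and the conclusion $DA(h)=0$ — transports correctly under the dilation $F_\lambda$. This is just a homogeneity bookkeeping, but writing it out carefully is the substantive content of the proof; everything else follows directly from the foliation result Theorem~\ref{th:perturbation}.
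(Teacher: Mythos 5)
Your argument is correct and is exactly the route the paper intends: the sentence preceding the corollary says it is a consequence of Theorem~\ref{th:perturbation} ``because of the dilation property of the Euclidean static vacuum pair,'' which is precisely your combination of the dilation family $\Sigma_t=(1+t)\Sigma$ with the homogeneity of the system \eqref{eq:static-bdry} to transfer static regularity from some $\Sigma_{t_0}$, $t_0\in J$, back to $\Sigma=\Sigma_0$. The only point to confirm is that ``star-shaped'' is meant in the radial-graph sense, so that $\bar g(X,\nu)=\langle x,\nu\rangle/(1+t)\ge 0$ with positivity on a dense subset and $\{\Sigma_t\}$ is indeed a smooth generalized foliation.
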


The purpose of this note is to extend Theorem~\ref{th:perturbation}. 

\begin{Definition}\label{def}
A collection of embedded hypersurfaces $\{ \Sigma_t\}\subset \mathbb{R}^n$ is  a \emph{smooth one-sided family of hypersurfaces foliating along  $\hat{\Sigma}_t \subset \Sigma_t$ with  relatively simply-connected $\Sigma_t\setminus \hat{\Sigma}_t$} if the deformation vector $X$ of $\{ \Sigma_t\}$ is smooth and on each $\Sigma_t$, $\bar g( X, \nu) =\zeta \ge 0$ with $\zeta>0$  in a dense subset of  each $\hat{\Sigma}_t \subset \Sigma_t$ satisfying $\pi_1(\Sigma_t, \hat{\Sigma}_t)=0$. 
\end{Definition}

For a subset $U$ in $M$, the condition $\pi_1(M, U)=0$ says that $U$ is connected and the inclusion map $U \hookrightarrow M$ induces a surjection $\pi_1(U) \to \pi_1(M)$. In our setting, we clearly have $\pi_1(\mathbb R^n \setminus \Omega, \Sigma)=0$. Thus, the additional condition for a subset $\hat{\Sigma}\subset \Sigma$ to satisfy $\pi_1(\Sigma, \hat{\Sigma})=0$ implies $\pi_1(\mathbb R^n\setminus \Omega, \hat{\Sigma})=0$. In loose terms, the later condition implies that for any point $x\in \mathbb R^n\setminus \Omega$, all paths from $x$ to $\hat{\Sigma}$ are (homotopically) equivalent. See Figure~\ref{one-sided} below. This property is used in Theorem~\ref{th:extension} below to ensure certain global extensions of local vector fields.

  \begin{figure}
\begin{center}
\includegraphics[width=0.8\textwidth]{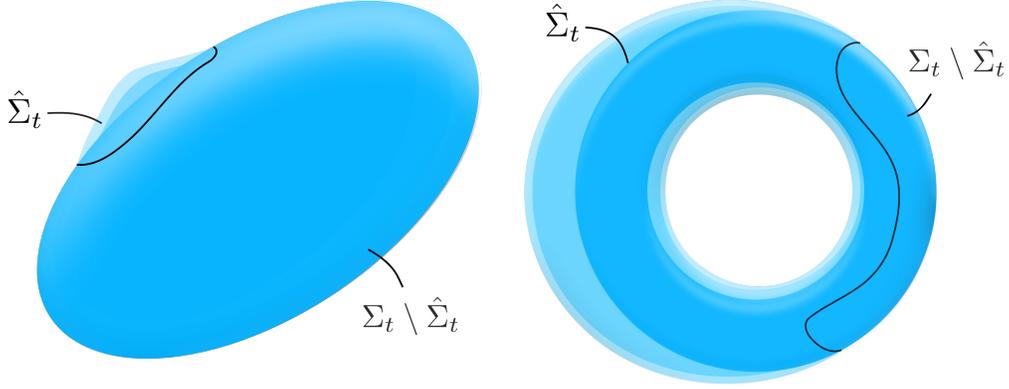}
\caption{Each figure illustrates Definition~\ref{def} that $\{\Sigma_t\}$ foliates along $\hat{\Sigma}_t$  with  relatively simply-connected $\Sigma_t\setminus {\hat \Sigma}_t$. In the left figure, a one-sided family of (topological) spheres $\{\Sigma_t\}$ is shown where $\hat{\Sigma}_t$ can be very small. In other words, $\Sigma_t$ can largely overlap on $\Sigma_t\setminus \hat{\Sigma}_t$. The right figure illustrates a one-sided family of (topological) tori with $\pi_1(\Sigma_t, \hat{\Sigma}_t)=0$.  
}
\label{one-sided}
\end{center}
\end{figure}

Note that a smooth generalized foliation  $\{\Sigma_t\}$ defined earlier is necessarily a smooth one-sided family of hypersurfaces foliating along $\hat{\Sigma}_t \subset \Sigma_t$ with relatively simply-connected $ \Sigma_t\setminus \hat{\Sigma}_t$ (by letting $\hat\Sigma_t = \Sigma_t$). However, a smooth one-sided family $\{ \Sigma_t\}$ in the above sense may not form a foliation because the leaves can overlap on $\Sigma_t \setminus \hat{\Sigma}_t $. The following theorem generalizes Theorem~\ref{th:perturbation}.

\begin{Theorem}\label{th:one-sided}
Let $\delta>0$, $t\in [-\delta, \delta]$, and each $\Omega_t\subset \mathbb{R}^n$ be a bounded open subset with hypersurface boundary $\Sigma_t = \partial\overline{ \Omega_t}$  embedded  in $\mathbb{R}^n$.  Suppose the boundaries $\{ \Sigma_t\}$ form a smooth one-sided family of hypersurfaces foliating along  $\hat{\Sigma}_t \subset \Sigma_t$ with relatively simply-connected $\Sigma_t\setminus \hat{\Sigma}_t$. Then  there is an open dense subset $J\subset (-\delta,\delta)$ such that  $\Sigma_t$  is static regular in $\mathbb{R}^n\setminus \Omega_t$ for all $t\in J$.
\end{Theorem}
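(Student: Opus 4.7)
The overall strategy is to imitate the proof of Theorem~\ref{th:perturbation}: show that $J=\{t\in(-\delta,\delta): \Sigma_t \text{ is static regular}\}$ is open and dense. Openness is standard, since the linearized Bartnik boundary value problem varies continuously in $t$ through the embedding of $\Sigma_t$ and its Fredholm cokernel dimension is upper semi-continuous, so triviality of the cokernel (which is equivalent to static regularity) is an open condition.

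For density I argue by contradiction. Suppose there is an open subinterval $I\subset(-\delta,\delta)$ on which no $\Sigma_t$ is static regular. After pulling everything back to a fixed model hypersurface via the flow of the deformation vector $X$ of $\{\Sigma_t\}$ and using the finite dimensionality and smooth $t$-dependence of the cokernel, I select a smooth one-parameter family of nontrivial $(h_t, v_t)\in\C^{2,\alpha}_{-q}(\mathbb{R}^n\setminus \Omega_t)$ solving \eqref{eq:static-bdry} with $DA(h_t)\not\equiv 0$ on $\Sigma_t$ for every $t\in I$. Differentiating the boundary conditions $h_t^\intercal=0$ and $DH(h_t)=0$ along $X$ and exploiting the formal self-adjointness of the linearized static operator (the same computation that drives the proof of Theorem~\ref{th:perturbation} in \cite{An-Huang:2021}), I expect to obtain the integral identity
\begin{equation*}
\int_{\Sigma_t} |DA(h_t)|^2 \, \zeta_t \, d\sigma = 0 \quad \text{for a.e.\ } t\in I,
\end{equation*}
where $\zeta_t=\bar g(X,\nu_t)\geq 0$. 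Since $\zeta_t > 0$ on a dense subset of $\hat \Sigma_t$, this forces $DA(h_t)=0$ on $\hat \Sigma_t$. Unlike in the foliation case, however, this vanishing holds \emph{only} on the possibly proper subset $\hat\Sigma_t\subset\Sigma_t$, so no immediate contradiction is available.

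The main obstacle, and the step where the new arguments of \cite{An-Huang:2022} enter, is to upgrade $DA(h_t)|_{\hat\Sigma_t}=0$ to vanishing on all of $\Sigma_t$. Combined with $h_t^\intercal=0$ and $DH(h_t)=0$, the extra vanishing $DA(h_t)=0$ on $\hat\Sigma_t$ means the full linearized Bartnik Cauchy data vanishes along $\hat\Sigma_t$. The plan is to convert this into a locally defined vector field $Y$ on a neighborhood of $\hat\Sigma_t$ in $\mathbb{R}^n\setminus\Omega_t$ realizing $(h_t,v_t)$ as a pure-gauge (i.e.\ diffeomorphism-generated) linearized solution. The topological hypothesis $\pi_1(\Sigma_t,\hat\Sigma_t)=0$, which implies $\pi_1(\mathbb{R}^n\setminus\Omega_t,\hat\Sigma_t)=0$, is precisely what is needed to globalize $Y$ via monodromy-free analytic continuation; this is the content of the extension result (Theorem~\ref{th:extension}) alluded to in the introduction. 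Once $Y$ is defined on all of $\mathbb{R}^n\setminus\Omega_t$, the asymptotic decay $(h_t,v_t)\in\C^{2,\alpha}_{-q}$ forces the associated diffeomorphism to decay as well, so $(h_t,v_t)$ is globally pure gauge, giving $DA(h_t)\equiv 0$ on $\Sigma_t$ and contradicting the choice of the family. I expect the analytic-continuation/global-extension step to be the delicate part; the variational identity and the openness of $J$ should follow by straightforward adaptation of the arguments in \cite{An-Huang:2021}.
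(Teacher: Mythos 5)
Your outline follows the paper's strategy up to the point where the Green-type identity yields $\int_{\Sigma_t} 2\zeta_t |DA(h_t)|^2\, d\sigma = 0$ and hence $DA(h_t)=0$ on $\hat\Sigma_t$; that much matches the first half of Theorem~\ref{th:normal}. The genuine gap is in the next step. You propose to pass directly from the vanishing of the first-order Cauchy data $(h_t^\intercal, DA(h_t))$ on $\hat\Sigma_t$ to a pure-gauge representation of $(h_t,v_t)$. But the localized uniqueness theorem (Theorem~\ref{th:trivial}) requires a \emph{third} boundary condition, $D(\nabla_\nu A)(h_t)=0$ on $\hat\Sigma_t$, and this is not a technicality: it is what makes Proposition~\ref{pr:v} work. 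To show $v\equiv 0$ one needs the full second jet of $h$ (in geodesic gauge) to vanish on $\hat\Sigma$, so that restricting $-D\Ric(h)+\nabla^2 v=0$ to the boundary gives $\nabla^2 v=0$ on $\hat\Sigma$ and unique continuation for the harmonic functions $\partial v/\partial x_i$ applies. With only $h=\nabla h=0$ on $\hat\Sigma$ (which is all that $h^\intercal=0$ and $DA(h)=0$ give you in geodesic gauge), $D\Ric(h)$ need not vanish on $\hat\Sigma$, you cannot conclude $v=0$, and without $v=0$ the tensor $h$ is not a Ricci-flat deformation, so there is no local gauge vector field available to continue analytically in the first place.

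The paper closes this gap by a bootstrap that is absent from your proposal: by Proposition~\ref{pr:limit} the difference quotients of the kernel family converge to some $(p,z)$, and $(p - L_X h, z - X(v))$ is again an element of $\Ker L$ once $DA(h)=0$ on $\Sigma^+$ is fed back into the transformation law \eqref{eq:boundary}; applying the same Green-type identity to this derived kernel element gives $DA(p-L_Xh)=0$ on $\Sigma^+$, which a further computation identifies with $-\zeta\,\nabla_\nu\bigl(DA(h)\bigr)$, yielding the needed second-order condition $D(\nabla_\nu A)(h)=0$. This second application of the integral identity to the $t$-derivative of the kernel is the key new ingredient of Theorem~\ref{th:normal}. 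Two smaller discrepancies: the paper does not argue density by contradiction with a smooth family of cokernel elements --- the open dense set $J$ is produced directly by Proposition~\ref{pr:limit}, which only supplies a sequence $t_j\searrow a$ with convergent difference quotients, and the whole argument is run at a fixed $a\in J$. And in Proposition~\ref{pr:Ricci} the gauge vector field is built globally from the start via a Bianchi-gauge equation on an extended manifold and then matched on an interior region $U$ using Theorem~\ref{th:extension}; the hypothesis $\pi_1(\Sigma,\hat\Sigma)=0$ enters there, roughly in the monodromy-free role you describe.
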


In the special case that $\Sigma$ is simply-connected (e.g., $\Sigma$ is a topological sphere), we trivially have $\pi_1(\Sigma, \hat \Sigma)=0$ for any nonempty connected subset $\hat \Sigma$. Slight perturbation on $\hat \Sigma$ produces  a one-sided family $\{\Sigma_t\}$ with $\Sigma_0=\Sigma$ that foliates along small subsets ${\hat \Sigma}_t\subset \Sigma_t$ so that  $\Sigma_t \setminus \hat {\Sigma}_t$ coincides with $\Sigma\setminus \hat \Sigma$ for all $t$. See the left figure in Figure~\ref{one-sided}. Theorem~\ref{th:one-sided} says that $\Sigma_t$ is static regular for $t$ in an open and dense set. Together with Theorem~\ref{th:static}, we give  the following corollary that one can solve for static vacuum extensions whose boundary data are arbitrarily close to the induced boundary data $(\bar g^\intercal, H_{\bar g})$ on $\Sigma$, except a small subset $\hat \Sigma\subset \Sigma$.

\begin{Corollary}\label{co}
Let $\Sigma= \partial \overline{\Omega}$ be a simply-connected, closed, embedded hypersurface in $\mathbb R^n$.  Given any nonempty open subset $\hat \Sigma\subset \Sigma$ and  any $\delta>0$, there exists $(\tau_0, \phi_0)\in \C^{2,\alpha}(\Sigma)\times \C^{1,\alpha}(\Sigma)$ and  constants $\epsilon_0, C>0$ satisfying 
\begin{align*}
	&(\tau_0, \phi_0) = (\bar g^\intercal, H_{\bar g} ) \mbox{ on } \Sigma\setminus \hat \Sigma\\
	&\| (\tau_0, \phi_0) - (\bar g^\intercal, H_{\bar g} )\|_{\C^{2,\alpha}(\hat \Sigma) \times \C^{1,\alpha}(\hat \Sigma)} < \delta
\end{align*}
such that for each $\epsilon \in (0, \epsilon_0)$, if $(\tau, \phi)$ satisfies~$\| (\tau,\phi)- (\tau_0, \phi_0) \|_{\C^{2,\alpha}(\Sigma)\times \C^{1,\alpha}(\Sigma)}<\epsilon$, then there exists an asymptotically flat  pair $(g, u)$ with $\|(g, u) - (\bar{g}, 1) \|_{\C^{2,\alpha}_{-q}(\mathbb{R}^n \setminus \Omega)} < C\epsilon$ such that $(g, u)$ is a static vacuum pair in $\mathbb{R}^n\setminus \Omega$ having the Bartnik boundary data $(g^\intercal, H_g) = (\tau, \phi)$ on~$\Sigma$.

Furthermore, the solution $(g, u)$ is geometrically unique in a neighborhood $\mathcal U$ of $(\bar g, 1)$ in the $\C^{2,\alpha}_{-q}(\mathbb{R}^n\setminus \Omega)$-norm.

\end{Corollary}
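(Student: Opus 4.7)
The plan is to perturb $\Sigma$ inside $\hat\Sigma$ to produce a nearby static-regular hypersurface $\Sigma_{t_0}$, apply Theorem~\ref{th:static} there, and then transport both the extensions and the existence/uniqueness statement back to $\mathbb R^n\setminus\Omega$ through the generating diffeomorphism. First I would fix a nonempty connected open subset $U\subset\hat\Sigma$ and pick a smooth vector field $X$ on $\mathbb R^n$, compactly supported in a small tubular neighborhood of $U$, whose normal component $\bar g(X,\nu)=\zeta$ on $\Sigma$ is nonnegative, strictly positive on an open dense subset of $U$, and identically zero outside $\hat\Sigma$. Let $\Phi_t$ denote its flow and set $\Sigma_t:=\Phi_t(\Sigma)$, $\Omega_t:=\Phi_t(\Omega)$, $\hat\Sigma_t:=\Phi_t(U)$ for $|t|<\delta'$ with $\delta'$ small enough that $\Phi_t$ is a diffeomorphism. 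Because $\Sigma$ is simply-connected, so is each $\Sigma_t$, hence $\pi_1(\Sigma_t,\hat\Sigma_t)=0$; thus $\{\Sigma_t\}$ is a smooth one-sided family of hypersurfaces foliating along $\hat\Sigma_t$ in the sense of Definition~\ref{def}. Theorem~\ref{th:one-sided} then produces an open dense set $J\subset(-\delta',\delta')$ such that $\Sigma_t$ is static regular in $\mathbb R^n\setminus\Omega_t$ for every $t\in J$.

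Next I would choose $t_0\in J$ with $|t_0|$ so small that, writing $\Psi:=\Phi_{t_0}$, the pullback data
\[
(\tau_0,\phi_0):=(\Psi|_\Sigma)^*\!\left(\bar g^\intercal|_{\Sigma_{t_0}},\,H_{\bar g}|_{\Sigma_{t_0}}\right)
\]
satisfies $\|(\tau_0,\phi_0)-(\bar g^\intercal,H_{\bar g})\|_{\C^{2,\alpha}(\hat\Sigma)\times \C^{1,\alpha}(\hat\Sigma)}<\delta$; this is achievable because $\Psi\to \mathrm{Id}$ as $t_0\to 0$. Since $\Psi$ is the identity outside $\operatorname{supp}X\subset\hat\Sigma$, automatically $(\tau_0,\phi_0)=(\bar g^\intercal,H_{\bar g})$ on $\Sigma\setminus\hat\Sigma$. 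Applying Theorem~\ref{th:static} to $\Sigma_{t_0}$ supplies constants $\epsilon_0',C'$ controlling existence and local uniqueness of asymptotically flat static vacuum extensions on $\mathbb R^n\setminus\Omega_{t_0}$ with Bartnik data near $(\bar g^\intercal|_{\Sigma_{t_0}},H_{\bar g}|_{\Sigma_{t_0}})$. Given $(\tau,\phi)$ on $\Sigma$ close to $(\tau_0,\phi_0)$, I push forward through $\Psi|_\Sigma$, solve on $\mathbb R^n\setminus\Omega_{t_0}$, and pull the resulting $(g',u')$ back via $\Psi$ to produce $(g,u):=\Psi^*(g',u')$ on $\mathbb R^n\setminus\Omega$ with $(g^\intercal,H_g)=(\tau,\phi)$. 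The required bound $\|(g,u)-(\bar g,1)\|_{\C^{2,\alpha}_{-q}}\leq C\epsilon$ follows by the triangle inequality combined with $\|\Psi^*\bar g-\bar g\|_{\C^{2,\alpha}_{-q}}=O(|t_0|)$, the latter absorbed into $C$ after shrinking $\epsilon_0$; geometric uniqueness transfers directly through $\Psi$.

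The main obstacle is the norm-bookkeeping at the transport step: one must verify that pullback by the compactly-supported diffeomorphism $\Psi$ preserves the weighted H\"older structure and is compatible with whichever gauge (static-harmonic, orthogonal) underlies the uniqueness assertion of Theorem~\ref{th:static}. Because $\Psi$ equals the identity outside a compact set, the asymptotic fall-off is preserved verbatim, so these estimates reduce to a standard chain-rule argument on a compact neighborhood of $\hat\Sigma$, with constants depending only on finitely many derivatives of $X$.
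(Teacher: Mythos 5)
Your overall strategy is exactly the one the paper intends (the paper never writes out a formal proof of this corollary; the argument is only sketched in the paragraph preceding it): perturb $\Sigma$ along a connected piece of $\hat\Sigma$ to create a smooth one-sided family foliating along $\hat\Sigma_t$, use simple-connectivity to get $\pi_1(\Sigma_t,\hat\Sigma_t)=0$, invoke Theorem~\ref{th:one-sided} to find a static regular leaf $\Sigma_{t_0}$ with $t_0$ small, apply Theorem~\ref{th:static} there, and transport back by the flow. The construction of $(\tau_0,\phi_0)$, the support condition guaranteeing $(\tau_0,\phi_0)=(\bar g^\intercal,H_{\bar g})$ off $\hat\Sigma$, and the transfer of geometric uniqueness through $\Psi$ are all fine.

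The one step that fails as written is the final norm estimate. You have $\|\Psi^*(g',u')-(\bar g,1)\|\le \|\Psi^*(g',u')-\Psi^*(\bar g,1)\|+\|\Psi^*(\bar g,1)-(\bar g,1)\|\le C'\epsilon+O(|t_0|)$, and the second term is a \emph{fixed} positive quantity independent of $\epsilon$; it cannot be ``absorbed into $C$ after shrinking $\epsilon_0$,'' since $C\epsilon\to 0$ as $\epsilon\to 0$ while $\|\Psi^*\bar g-\bar g\|$ does not. Indeed, if the bound $\|(g,u)-(\bar g,1)\|<C\epsilon$ held for all small $\epsilon$ with $(\tau,\phi)=(\tau_0,\phi_0)$ fixed, continuity of the trace $(g,u)\mapsto (g^\intercal,H_g)$ would force $(\tau_0,\phi_0)=(\bar g^\intercal,H_{\bar g})$, contradicting the point of the corollary. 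What your construction actually yields is that $(g,u)$ lies within $C\epsilon$ of the static vacuum pair $(g_{t_0},1):=\Psi^*(\bar g,1)$, which is the extension realizing $(\tau_0,\phi_0)$ and which is itself within $O(|t_0|)\lesssim \delta$ of $(\bar g,1)$. This is the content the paper is after (and reveals that the ball in the corollary's statement should be centered at $(g_{t_0},1)$, or its radius should read $C(\epsilon+\delta)$); you should state the estimate in that corrected form rather than claim the $O(|t_0|)$ error disappears.
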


The proof to Theorem~\ref{th:one-sided} involves several new arguments used in our recent work for general asymptotically flat, static vacuum background metrics~\cite{An-Huang:2022}. One of the key arguments is the following theorem, which can be viewed as a uniqueness theorem for ``localized'' boundary data. 

\begin{Theorem}\label{th:trivial}
Let $\hat{\Sigma}$ be an open subset of $\Sigma$ (can be the entire $\Sigma$) satisfying $\pi_1(\Sigma, \hat{\Sigma})=0$. Let $(h, v)\in \C^{2,\alpha}_{-q}(\mathbb R^n\setminus \Omega)$ solve 
\begin{align*}
	&\left \{ \begin{array}{l}-D\Ric(h) + \nabla^2 v = 0\\
	\Delta v=0 \end{array} \right.	 \mbox{ in } \mathbb R^n\setminus \Omega\\
	&\left \{ \begin{array}{l} h^\intercal = 0\\
	DA(h)=0 \\
	 D (\nabla_\nu A)  (h) = 0\end{array} \right. \quad \mbox{ on }\hat{\Sigma}
\end{align*}
where $ D (\nabla_\nu A)  (h)$ denotes the linearization of $\nabla_\nu A$. Then there is a vector field $X\in \C^{3,\alpha}_{\mathrm{loc}}(\mathbb R^n\setminus \Omega) $ satisfying $X=0$ on $\hat \Sigma$ and $X-K \in \C^{3,\alpha}_{-q}(\mathbb R^n\setminus \Omega)$ for some Euclidean Killing vector field $K$ (possibly zero) such that 
\[
	 h = L_X \bar g\quad \mbox{ and } \quad v= 0\quad  \mbox{ in } \mathbb R^n\setminus \Omega.
\]
Furthermore, if $h^\intercal =0$ and $DH(h)=0$ everywhere on $\Sigma$, then $X=0$ everywhere on $\Sigma$ and thus $DA(h)=0$ on $\Sigma$. 
\end{Theorem}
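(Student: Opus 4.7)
The plan is to build the vector field $X$ in two stages: first a local construction near $\hat\Sigma$ using a normal-form matching of the 2-jet combined with unique continuation, then a global extension using the topological hypothesis $\pi_1(\Sigma,\hat\Sigma)=0$. The asymptotic statement $X-K\in \C^{3,\alpha}_{-q}$ will follow from the decay of $h$ and a standard analysis of the Killing operator $Y\mapsto L_Y \bar g$ on the asymptotically Euclidean end.

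For the local step, I would construct a smooth vector field $Y$ on a one-sided collar neighborhood of $\hat\Sigma$ with $Y|_{\hat\Sigma}=0$, whose first and second normal derivatives are prescribed so that $L_Y \bar g$ matches $h$ to second normal order along $\hat\Sigma$. The zeroth-order matching is automatic, since $Y|_{\hat\Sigma}=0$ forces $(L_Y \bar g)^\intercal=0$, consistent with $h^\intercal=0$. The remaining components of $h$ on $\hat\Sigma$ are matched by choosing $\nabla_\nu Y|_{\hat\Sigma}$; the condition $DA(h)=0$, combined with the Gauss-type formula for $DA(L_Y \bar g)$, is what allows the first normal derivative of $h - L_Y \bar g$ to be matched by choosing the tangential component, while $D(\nabla_\nu A)(h)=0$ fixes $\nabla_\nu^2 Y|_{\hat\Sigma}$ to match the second normal derivative. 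Setting $\tilde h := h - L_Y \bar g$ and $\tilde v := v$, the pair then solves the linearized static vacuum system with $\tilde h$ having vanishing 2-jet along $\hat\Sigma$. I would further correct $Y$ by an infinitesimal diffeomorphism vanishing on $\hat\Sigma$ so that $\tilde h$ lies in the static-harmonic gauge of \cite{An-Huang:2021}, in which the system for $(\tilde h,\tilde v)$ is elliptic. The coupled equation $-D\Ric(\tilde h) + \nabla^2\tilde v = 0$ together with $\Delta \tilde v = 0$ and the vanishing 2-jet of $\tilde h$ on $\hat\Sigma$ forces $\tilde v$ and $\nabla_\nu \tilde v$ to vanish on $\hat\Sigma$ (essentially from the linearized Bianchi identity). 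Aronszajn--Calderón unique continuation for elliptic systems then yields $\tilde h=0$ and $\tilde v=0$ in a one-sided neighborhood of $\hat\Sigma$, so locally $h=L_Y \bar g$ and $v=0$.

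For the global step, the topological hypothesis $\pi_1(\Sigma,\hat\Sigma)=0$ implies that the inclusion-induced map $\pi_1(\hat\Sigma)\to \pi_1(\mathbb R^n\setminus \Omega)$ is surjective, so loops in $\mathbb R^n\setminus\Omega$ can be homotoped into $\hat\Sigma$ where $Y$ vanishes. I would extend $Y$ along paths emanating from $\hat\Sigma$ by integrating $L_X \bar g = h$; any two extensions along homotopic paths differ by an element of the kernel of the Killing operator, which is a global Killing vector of $\bar g$ that must vanish because it agrees with zero on an open piece of $\hat\Sigma$. This produces a well-defined global $X\in \C^{3,\alpha}_{\mathrm{loc}}(\mathbb R^n\setminus\Omega)$ with $X=0$ on $\hat\Sigma$, $L_X\bar g=h$, and $v=0$ everywhere. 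The asymptotic decomposition $X = K + (X-K)$ with $X-K\in \C^{3,\alpha}_{-q}$ follows because $L_X\bar g\in \C^{2,\alpha}_{-q}$ and the kernel of $L_\cdot\bar g$ on $\mathbb R^n$ consists of Euclidean Killing vectors. The final claim is that if $h^\intercal=0$ and $DH(h)=0$ on all of $\Sigma$, then $X|_\Sigma$ satisfies a closed elliptic system on $\Sigma$ (Killing equation restricted to $T\Sigma$ plus the linearized mean-curvature constraint) with zero Cauchy data on $\hat\Sigma$; the relative simple-connectedness hypothesis propagates the vanishing to all of $\Sigma$, and then $DA(h)=DA(L_X\bar g)$ also vanishes there.

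The hardest part will be the unique-continuation step, specifically tracking how the vanishing 2-jet of $\tilde h$ forces vanishing Cauchy data for $\tilde v$: the boundary hypotheses only directly control $h$, so the scalar $\tilde v$ must inherit its boundary vanishing from $\tilde h$ through the constraint structure of the linearized system. A secondary technical issue is making the global monodromy argument in the extension step rigorous, which is precisely where the relative fundamental group hypothesis $\pi_1(\Sigma,\hat\Sigma)=0$ is forced to do its work.
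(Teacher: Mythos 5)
Your overall architecture mirrors the paper's (kill $v$, reduce $h$ to a pure gauge deformation using the vanishing $2$-jet on $\hat\Sigma$, then globalize via the relative $\pi_1$ condition and the triviality of Killing fields vanishing on an open set), but two of your key steps have genuine gaps. The first is the treatment of $v$. Restricting $-D\Ric(h)+\nabla^2 v=0$ to $\hat\Sigma$, where the $2$-jet of the gauge-transformed $h$ vanishes, yields only $\nabla^2 v=0$ on $\hat\Sigma$; it does \emph{not} give vanishing Cauchy data for $v$ itself, and your claim that the linearized Bianchi identity forces $\tilde v=\nabla_\nu\tilde v=0$ on $\hat\Sigma$ is unjustified. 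As a purely local statement it is in fact false: an affine $v=a+b\cdot x$ is harmonic, has $\nabla^2 v=0$, and is compatible with every local hypothesis, so no unique continuation argument confined to a one-sided collar can rule it out. The paper's Proposition~\ref{pr:v} is necessarily global: each $f=\partial v/\partial x_i$ is harmonic with $\nabla f=0$ on $\hat\Sigma$, hence constant on all of $\mathbb R^n\setminus\Omega$ by unique continuation from the boundary, and the constant is killed by the decay $v\in\C^{2,\alpha}_{-q}$. You must run this global step first; only then does $h$ become a Ricci-flat deformation to which the rest of the argument applies.

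The second gap is the global extension of $Y$ ``by integrating $L_X\bar g=h$ along paths.'' The prolonged ODE system for $(X,\nabla X)$ along a path returns $h=L_X\bar g$ at the far endpoint only if the integrability obstruction, the linearized full Riemann tensor $D\mathrm{Rm}(h)$, vanishes along the path. The equation $D\Ric(h)=0$ gives this only for $n=3$; for $n\ge 4$ a Ricci-flat deformation need not be locally of the form $L_X\bar g$ at all. Your monodromy argument addresses well-definedness of the continuation but not its existence. The paper resolves this by a global gauge fixing: extend $h$ by zero into a region $U\subset\Omega$ across $\hat\Sigma$ (legitimate because the $2$-jet vanishes there), solve $\Delta Z=\beta k$ with the Bianchi operator $\beta$ so that $k+L_Z\bar g$ is harmonic and hence real-analytic by Weyl's lemma; the relevant obstruction is then an analytic tensor vanishing on the open set $U$ (where $k+L_Z\bar g=L_Z\bar g$), hence vanishing identically, and the Nomizu-type extension theorem (Theorem~\ref{th:extension}) applies with $\pi_1(\hat M,U)=0$. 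Without such an analyticity or curvature-vanishing input your path extension does not get started. A smaller point: in the final claim, the propagation of $X=0$ from $\hat\Sigma$ to all of $\Sigma$ comes from unique continuation for the linear PDE system satisfied by the normal and tangential components of $X$ on $\Sigma$, not from the relative simple-connectedness hypothesis, which has already done its work in the global extension.
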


Theorem~\ref{th:trivial} says that the solutions  must be ``trivial'' in the sense that $(h, v)$ must arise from ``infinitesimal'' diffeomorphisms. More precisely, if we let $X$ be a vector field as in the above theorem and let $\phi_t$ be the family of diffeomorphisms on $\mathbb R^n\setminus \Omega$ generated from $X$ (in particular, $\phi_t$ is the identity map on $\Sigma$ for all $t$). Then the family of static vacuum pairs $(g_t, u_t)= \phi_t^*( \bar g,  1)$ as the pull-back pairs of $(\bar g, 1)$ would also satisfy \eqref{eq:static} and have the same boundary data $(\bar g^\intercal, A_{\bar g}, \nabla_{\nu } A_{\bar g} )$ on~$\hat{\Sigma}$ (in fact on the entire $\Sigma$).\footnote{Here $\nabla_{\nu_g } A_{g}$ means the $\nu_g$-covariant derivative of the second fundamental forms of $g$-equidistant hypersurfaces to the boundary $\Sigma$.} The linearization of $(g_t, u_t)$ becomes $(L_X \bar g, X(1)) = (L_X \bar g, 0)$, which satisfies the linearized system in Theorem~\ref{th:trivial}. On the other hand, Theorem~\ref{th:trivial} says that those are the \emph{only} solutions.

The rest of this note is organized as  follows:  Theorem~\ref{th:trivial} is proved in Section~\ref{se:boundary}, and then Theorem~\ref{th:one-sided} is proved in Section~\ref{se:perturbed}.

\section{Localized boundary data}\label{se:boundary}

%We denote the static vacuum operator, corresponding to the static vacuum equation~\ref{eq:static} by 
%\[
%	S(g, u) = ( -u \Ric_g + \nabla^2_g u , \, \Delta_g u). 
%\]
%Then we let $DS(h, v)$ to denote the linearization of $S(g, u)$ at the Euclidean pair $(\bar g, 1)$:
%\[
%	DS(h, v) := (- D\Ric(h) + \nabla^2 v, \, \Delta v)	
%\]	
%where the covariant derivatives are all taken with respect to the Euclidean metric $\bar g$.  

The major motivation for the definition of static regular, Definition~\ref{definition:static-regular}, is the following uniqueness theorem for Cauchy boundary data from \cite{An-Huang:2021}.

\begin{theorem}
Let $(h, v)\in \C^{2,\alpha}_{-q}(\mathbb R^n\setminus \Omega)$ solve 
\begin{align*}
	&\left \{ \begin{array}{l}-D\Ric(h) + \nabla^2 v = 0\\
	\Delta v=0 \end{array} \right.	 \mbox{ in } \mathbb R^n\setminus \Omega\\
	&\left \{ \begin{array}{l} h^\intercal = 0\\
	DA(h)=0\end{array} \right. \quad \mbox{ on } \Sigma.
\end{align*}
Then there is a vector field $X\in \C^{3,\alpha}_{\mathrm{loc}}(\mathbb R^n\setminus \Omega) $ satisfying $X=0$ on $\Sigma$ and $X-K \in \C^{3,\alpha}_{1-q}(\mathbb R^n\setminus \Omega)$ for some Euclidean Killing vector field $K$ (possibly zero) such that 
\[
	 h = L_X \bar g\quad \mbox{ and } \quad v= 0 \mbox{ in } \mathbb R^n\setminus \Omega.
\]
\end{theorem}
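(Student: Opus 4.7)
The plan is to reduce the result to the earlier (global) Cauchy boundary data uniqueness theorem stated just above by means of a localized gauge-fixing near $\hat\Sigma$ together with elliptic unique continuation, and then to globalize the resulting vector field using the topological hypothesis. The \emph{Furthermore} part is then extracted at the end by a boundary analysis on $\Sigma$.

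The first step is to construct a vector field $X_0\in\C^{3,\alpha}_{\mathrm{loc}}$ on a collar neighborhood $U$ of $\hat\Sigma$ with $X_0=0$ on $\hat\Sigma$, such that the pair $(\tilde h, v):=(h - L_{X_0}\bar g, v)$ has vanishing Cauchy data on $\hat\Sigma$, that is, $\tilde h$, $\partial_\nu\tilde h$, $v$, $\partial_\nu v$ all vanish on $\hat\Sigma$. In Fermi coordinates $(x^1,\ldots,x^{n-1},t)$ along $\hat\Sigma$, I would write $X_0$ as a Taylor series in $t$ whose coefficients are vector fields on $\hat\Sigma$; since $X_0=0$ on $\hat\Sigma$ forces the tangential derivatives of $X_0$ to vanish there, the lowest two Taylor coefficients provide exactly the gauge freedom needed to annihilate the mixed ($\nu$-tangent) and the $\nu\nu$ components of $\tilde h$ and $\partial_\nu\tilde h$ on $\hat\Sigma$. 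The purely tangential parts of $\tilde h$ and $\partial_\nu\tilde h$ vanish on $\hat\Sigma$ independently of the gauge thanks to $h^\intercal=0$ and $DA(h)=0$. The role of the third boundary condition $D(\nabla_\nu A)(h)=0$ is to supply the second normal-derivative information that, combined with the linearized equations $-D\Ric(h)+\nabla^2 v=0$ and $\Delta v=0$ restricted to $\hat\Sigma$, forces $v$ and $\partial_\nu v$ to vanish on $\hat\Sigma$ as well (the linearized static vacuum system constrains $\partial_\nu v$ in terms of $\nabla_\nu A$-type quantities of $h$).

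With trivial Cauchy data in hand, the second step is standard: under a suitable gauge such as the static-harmonic gauge used in \cite{An-Huang:2021}, $(\tilde h, v)$ satisfies a second-order linear elliptic system, and Aronszajn unique continuation propagated along chains of balls in the connected open set $\mathbb R^n\setminus\Omega$ yields $\tilde h\equiv 0$ and $v\equiv 0$ throughout $\mathbb R^n\setminus\Omega$. So $h = L_{X_0}\bar g$ on $U$, and on any simply connected open $V\subset\mathbb R^n\setminus\Omega$ we have $h|_V = L_{X_V}\bar g$ for a local primitive $X_V$ uniquely determined up to a Euclidean Killing field. To patch these local primitives into one globally defined $X\in\C^{3,\alpha}_{\mathrm{loc}}$ that agrees with $X_0$ near $\hat\Sigma$, I would parallel-transport from $\hat\Sigma$: the monodromy around any loop based at $\hat\Sigma$ takes values in the finite-dimensional space of Euclidean Killing fields and factors through $\pi_1(\mathbb R^n\setminus\Omega,\hat\Sigma)$, which the excerpt notes is trivial as a consequence of $\pi_1(\Sigma,\hat\Sigma)=0$. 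Thus $X$ is single-valued, and the asymptotic refinement $X - K\in\C^{3,\alpha}_{-q}$ for some Euclidean Killing field $K$ follows from $h\in\C^{2,\alpha}_{-q}$ exactly as in the earlier theorem.

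For the \emph{Furthermore} claim, assume $h^\intercal=0$ and $DH(h)=0$ on all of $\Sigma$. Decomposing $X|_\Sigma = X^\intercal + X^\nu\nu$ and using the identity $(L_X\bar g)^\intercal = L_{X^\intercal}\bar g^\intercal + 2X^\nu A_{\bar g}$ together with the analogous formula for $DH(L_X\bar g)$, the assumed identities translate into an overdetermined, principal-symbol-injective linear system for $(X^\intercal, X^\nu)$ on the connected hypersurface $\Sigma$; since $X\equiv 0$ on the open subset $\hat\Sigma$, unique continuation along $\Sigma$ forces $X=0$ on all of $\Sigma$, whence $DA(h) = DA(L_X\bar g) = 0$ on $\Sigma$. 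The main obstacle in this proof is the first step: precisely tracking how the three boundary conditions interact with the linearized static vacuum system so that all four data $\tilde h$, $\partial_\nu\tilde h$, $v$, $\partial_\nu v$ are simultaneously killed on $\hat\Sigma$ — the condition $D(\nabla_\nu A)(h)=0$ must be exactly what is needed to reach $\partial_\nu v=0$. The third step (globalization of $X$) is conceptually the most novel feature of the proof and is precisely where the topological hypothesis enters in an essential way.
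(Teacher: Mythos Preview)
Your proposal is aimed at the wrong statement. The theorem in question has boundary conditions imposed on the \emph{entire} boundary $\Sigma$, involves only the two conditions $h^\intercal=0$ and $DA(h)=0$, carries no hypothesis on $D(\nabla_\nu A)(h)$, no topological assumption $\pi_1(\Sigma,\hat\Sigma)=0$, and no ``Furthermore'' clause. Everything you wrote --- the localized gauge near $\hat\Sigma$, the role of the third boundary condition in killing $\partial_\nu v$, the monodromy argument to globalize $X$, the unique-continuation argument on $\Sigma$ for $(X^\intercal,X^\nu)$ --- belongs to Theorem~\ref{th:trivial}, not to the theorem stated here. As a proof of the present statement, your argument invokes hypotheses you do not have and establishes conclusions that are not asserted.

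The paper's proof of the stated theorem is a two-line citation of results from \cite{An-Huang:2021}: the proof of Lemma~4.8 there already shows $v\equiv 0$ (using the full-boundary Cauchy data $h^\intercal=0$, $DA(h)=0$ on $\Sigma$), whence $D\Ric(h)=0$ in $\mathbb R^n\setminus\Omega$; then Theorem~2.8 of \cite{An-Huang:2021} produces the vector field $X$ with $X=0$ on $\Sigma$ and the stated asymptotics such that $h=L_X\bar g$. No localization, no third boundary condition, and no topological input are needed. If your intention was to sketch a proof of Theorem~\ref{th:trivial}, your outline is broadly in the right spirit (and close to the paper's Propositions~\ref{pr:v} and \ref{pr:Ricci}), but that is not the statement you were asked to prove.
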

\begin{proof}
From the proof of Lemma 4.8 in \cite{An-Huang:2021}, we see that $v=0$ in $\mathbb R^n\setminus \Omega$. Therefore, $h$ is a Ricci flat deformation in the sense that $D\Ric(h)=0$ in $\mathbb R^n\setminus \Omega$. Then by Theorem 2.8 of \cite{An-Huang:2021}, we get the desired conclusion. 
\end{proof}

The goal of this section is to prove  Theorem~\ref{th:trivial}, whose main difference from the above theorem is  that the boundary conditions for Theorem~\ref{th:trivial}  are ``localized'' only on a subset $\hat{\Sigma}\subset \Sigma$ satisfying  $\pi_1(\Sigma, \hat{\Sigma}) = 0$. 

We will first establish some basic results, and then Theorem~\ref{th:trivial} follows immediately after proving Proposition~\ref{pr:v} and Proposition~\ref{pr:Ricci} below.

We say that a symmetric $(0, 2)$-tensor $h$ is said to satisfy the  \emph{geodesic gauge (of order $2$)} on $\Sigma$ if 
\[
	h(\nu, \cdot )=0, \quad  (\nabla_\nu h)(\nu, \cdot)=0 , \quad  (\nabla_\nu^2 h)(\nu, \cdot)=0 \quad \mbox{ on }  \Sigma
\]
 where $\nu$ is the unit normal vector of $\Sigma$ parallelly extended into a collar neighborhood of $\Sigma$.  
 
 Following the same argument as in \cite[Lemma 2.5]{An-Huang:2021}, we see that any tensor $h$ can be ``transformed'' to satisfy the geodesic gauge. 
 \begin{lemma}[Cf. {\cite[Lemma 2.5]{An-Huang:2021}}]\label{le:geodesic}
 Let $h\in \C^{2,\alpha}_{\mathrm{loc}}(\mathbb R\setminus \Omega)$ be a symmetric $(0,2)$-tensor. Then there exists a vector field $V\in \C^{3,\alpha}$ with $V=0$ on $\Sigma$ and $V$ vanishing outside a collar neighborhood of $\Sigma$ such that $k:= h+L_V \bar g$ satisfies the geodesic gauge on $\Sigma$. 
 \end{lemma}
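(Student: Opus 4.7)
The plan is to follow the strategy of Lemma 2.5 in An-Huang:2021, pushing the construction one order higher so that the second-order geodesic-gauge condition $(\nabla_\nu^2 k)(\nu,\cdot)|_\Sigma = 0$ is also arranged. Everything happens in a collar neighborhood of $\Sigma$: I would prescribe the first three normal jets of $V$ along $\Sigma$ algebraically so as to kill the $(\nu,\cdot)$-components of $k = h + L_V\bar g$ and their first two normal covariant derivatives, and then cut $V$ off smoothly.

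First, introduce Fermi coordinates $(s,y)\in [0,2\delta)\times \Sigma$ with $\partial_s = \nu$, together with a parallel orthonormal frame $\{e_0, e_1,\dots, e_{n-1}\}$ obtained by parallelly transporting an orthonormal frame on $\Sigma$ along the normal geodesics (so $e_0 = \nu$ and $\nabla_{e_0} e_\alpha = 0$). In this frame, $(\nabla_\nu^m k)(\nu, e_\alpha) = \partial_s^m k(e_0, e_\alpha)$, and the three gauge conditions become
\begin{equation*}
	\partial_s^m \bigl[h(e_0, e_\alpha) + (L_V\bar g)(e_0, e_\alpha)\bigr]\big|_{s=0} = 0,\qquad m = 0,1,2,\ \alpha = 0,1,\dots, n-1.
\end{equation*}
Because $V|_\Sigma = 0$, a direct computation using the vanishing Fermi Christoffel symbols $\Gamma^s_{s\beta}=0$ collapses the $m=0$ equation to an explicit algebraic formula for $\partial_s V|_{s=0}$ in terms of $h(e_0, e_\alpha)|_{s=0}$, producing a tangential field $A_1\in\C^{2,\alpha}(\Sigma)$.

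Next I would differentiate the above relation once in $s$ and evaluate at $s=0$, obtaining a lower-triangular linear system for $\partial_s^2 V|_{s=0}$ whose inhomogeneity depends only on the first normal jet of $h$ and the already-fixed data $V|_\Sigma = 0$, $\partial_s V|_{s=0} = A_1$; solving gives $A_2\in\C^{1,\alpha}(\Sigma)$. One further differentiation gives $A_3\in\C^{0,\alpha}(\Sigma)$ from the second normal jet of $h$ together with the previously determined jets. With $A_1, A_2, A_3$ in hand, I would invoke a Whitney-type extension to produce $V$ on the collar with $V|_\Sigma = 0$ and $\partial_s^k V|_{s=0} = A_k$ for $k=1,2,3$, and then multiply by a smooth cutoff $\chi(s)$ which is $1$ near $s=0$ and vanishes for $s\geq \delta$. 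Extending by zero outside the collar yields the desired $V$.

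The main obstacle I anticipate is verifying, at each of the three orders, that the unknown highest-order normal jet of $V$ enters the equation with invertible (in fact, essentially identity) coefficient, so that the three systems can really be solved sequentially. This triangularity is a direct consequence of the simple form of covariant derivatives along normal geodesics in the Fermi frame, and is exactly the phenomenon that made the order-one construction in Lemma 2.5 of An-Huang:2021 work. The remaining task — matching the declining regularities of $A_1, A_2, A_3$ to a global $\C^{3,\alpha}$ vector field $V$ — is a routine application of Whitney extension, and the cutoff preserves the gauge conditions because they live entirely on $\Sigma$.
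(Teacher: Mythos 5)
Your proposal is correct and follows essentially the same route the paper takes, namely the jet-prescription argument of \cite[Lemma 2.5]{An-Huang:2021} pushed one order higher: the paper gives no independent proof beyond citing that lemma, and your order-by-order triangular solve for $\partial_s^{m+1}V|_{\Sigma}$ followed by a Whitney extension and cutoff is exactly how that argument extends. The only nitpick is that $A_1$ (and likewise $A_2$, $A_3$) is not a tangential field in general, since the $\alpha=0$ equation forces a normal component $\bar g(\partial_s V,\nu)=-\tfrac12 h(\nu,\nu)$ on $\Sigma$.
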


The following lemma gives an analytic interpretation for the geometric boundary conditions of Theorem~\ref{th:trivial}.

\begin{lemma}\label{le:zero}
Let $\hat{\Sigma}$ be an open subset of the boundary $\Sigma$ (can be the entire $\Sigma$). Suppose $h \in \C^{2,\alpha}_{\mathrm{loc}}(\mathbb {R}^n\setminus \Omega)$ satisfies the geodesic gauge  and 
\[
h^\intercal =0, \quad DA(h)=0, \quad D (\nabla_\nu A)(h)=0 \quad \mbox{ on }  \hat{\Sigma}.   
\]
Then 
\[
	h=0, \quad \nabla h = 0, \quad \nabla^2 h =0 \quad \mbox{ on } \hat{\Sigma}. 
\]
\end{lemma}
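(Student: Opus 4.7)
The plan is to argue pointwise on $\hat{\Sigma}$ in $\bar g$-Fermi coordinates $(r, x^1, \dots, x^{n-1})$ near each $p\in\hat{\Sigma}$, with $\partial_r=\nu$ the unit normal parallelly extended into a collar neighborhood. In such coordinates the three geodesic-gauge conditions translate into the vanishing at $r=0$ of $h_{rr}$, $h_{ri}$, $\partial_r h_{rr}$, $\partial_r h_{ri}$, $\partial_r^2 h_{rr}$, and $\partial_r^2 h_{ri}$, while the boundary hypothesis $h^\intercal=0$ on $\hat{\Sigma}$ says $h_{ij}|_{\hat{\Sigma}}=0$ for tangential indices. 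Combining these already gives $h\equiv 0$ on $\hat{\Sigma}$ as a full $(0,2)$-tensor. Since $\hat{\Sigma}$ is open in $\Sigma$ and $h$ vanishes identically there, every derivative of $h$ in a direction tangent to $\Sigma$ is zero on $\hat{\Sigma}$, so $(\nabla_X h)|_{\hat{\Sigma}}=0$ for every $X\in T\Sigma$.

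To finish $\nabla h=0$ on $\hat{\Sigma}$ it remains to control $\nabla_\nu h$. The geodesic gauge already gives $(\nabla_\nu h)(\nu,\cdot)=0$, so only the tangential-tangential component $(\nabla_\nu h)^\intercal$ is at issue. This is where $DA(h)=0$ enters: since $h(\nu,\cdot)=0$ on $\Sigma$, the unit normal does not vary to first order under $g_t=\bar g+th$, and the linearization reduces to
\[
    DA(h)(X,Y)=\tfrac{1}{2}(\mathcal{L}_\nu h)(X,Y)\quad\text{for } X,Y\in T_p\hat{\Sigma}.
\]
Using $[\nu,\partial_i]=0$ in Fermi coordinates and $h|_{\hat{\Sigma}}=0$ to kill the lower-order correction terms, this collapses to a nonzero multiple of $(\nabla_\nu h)(X,Y)$, so $DA(h)=0$ forces $(\nabla_\nu h)^\intercal=0$ on $\hat{\Sigma}$.

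For $\nabla^2 h$ I would iterate the argument. Because $\nabla h$ vanishes identically on the open set $\hat{\Sigma}$, its derivative along any tangential direction is zero there, killing every component of $\nabla^2 h$ whose outer slot is tangential. The mixed normal-tangential component follows from the Ricci commutation
\[
    (\nabla^2 h)(\nu,X,\cdot,\cdot)-(\nabla^2 h)(X,\nu,\cdot,\cdot)=R(\nu,X)\cdot h,
\]
whose right-hand side vanishes because $h=0$ on $\hat{\Sigma}$, and whose second term equals $\nabla_X(\nabla_\nu h)-\nabla_{\nabla_X\nu}h$: a tangential derivative of the identically vanishing tensor $\nabla_\nu h$ plus a derivative of $h\equiv 0$ in the tangential direction $\nabla_X\nu$. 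The purely normal-normal component splits as before: the third geodesic-gauge condition gives $(\nabla_\nu^2 h)(\nu,\cdot)=0$, and the remaining tangential-tangential slot $(\nabla_\nu^2 h)^\intercal$ is controlled by $D(\nabla_\nu A)(h)=0$, which collapses in exactly the same way once $h$ and $\nabla h$ are known to vanish on $\hat{\Sigma}$.

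The main technical obstacle is verifying that $D(\nabla_\nu A)(h)$ really reduces to a nonzero multiple of $(\nabla_\nu^2 h)^\intercal$ on $\hat{\Sigma}$. This is a Fermi-coordinate bookkeeping exercise: one must linearize both the formula for $\nabla_\nu A$, which depends on the metric and its first two normal derivatives as well as the variation of the unit normal into the collar, and then discard every remaining correction as a contraction of $h$, $\nabla h$, or the variation of $\nu$ along $\Sigma$ with a smooth tensor at $\hat{\Sigma}$. Each such term vanishes by the cascade of previously established identities and the geodesic gauge, leaving only the claimed leading-order term, from which $(\nabla_\nu^2 h)^\intercal=0$ follows.
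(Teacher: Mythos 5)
Your proposal is correct and follows essentially the same route as the paper's proof: $h=0$ on $\hat\Sigma$ from the geodesic gauge plus $h^\intercal=0$, then $(\nabla_\nu h)^\intercal=0$ from the linearized second-fundamental-form formula (whose correction terms all involve $h(\nu,\cdot)$ or $h$ itself and hence vanish), and finally $(\nabla_\nu^2 h)^\intercal=0$ from $D(\nabla_\nu A)(h)=0$. The only organizational difference is in your last ``bookkeeping'' step: the paper avoids linearizing $\nabla_\nu A$ from scratch by first establishing the commutation $D(\nabla_\nu A)(h)=\nabla_\nu\big(DA(h)\big)$ on $\hat\Sigma$ (the extra term from varying $\nu_{g(t)}$ dies because $h=0$ and $\nabla h=0$ there) and then simply differentiating its explicit formula for $DA(h)$ in the $\nu$ direction.
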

\begin{proof}
The first identity is an immediate consequence of $h^\intercal=0$ and the geodesic gauge. 

To show the second identity, it suffices to show $(\nabla_\nu h)^\intercal=0$ because $(\nabla_{\mathrm{arbitrary}} h)(\nu,\cdot)=0$  and $(\nabla_{\mathrm{tangential}} h)^\intercal=0$ from $h=0$ and geodesic gauge. Recall the formula (see \cite[Equation (2.3)]{An-Huang:2021})
\begin{align}\label{eq:sff}
	DA(h)&= \tfrac{1}{2} (\nabla_\nu h)^\intercal	+ A\circ h - \tfrac{1}{2} L_\omega g^\intercal - \tfrac{1}{2} h (\nu, \nu) A
\end{align}
where the one-form $\omega$ is defined by $\omega(\cdot) = h(\nu, \cdot)$, $A$ is the second fundamental form of $\Sigma\subset (\mathbb R^n, \bar g)$, and $(A\circ h)_{ab}= \tfrac{1}{2} (A_{ac} h_b^c + A_{bc} h^c_a)$. Therefore, the assumption $DA(h)=0$ implies that $(\nabla_\nu h)^\intercal=0$ and thus $\nabla h=0$. 

To show $\nabla^2 h =0$, we just need to show that $(\nabla_\nu^2 h)^\intercal =0$ because $\nabla_{\mathrm{tangential}} (\nabla h)=0$ and $\nabla_\nu \nabla_{\mathrm{tangential}} h= \nabla_{\mathrm{tangential}} \nabla_\nu h$ plus terms involving $h$ and $\nabla h$, which are all zero on $\hat \Sigma$.  Note that since $h$ satisfies the geodesic gauge, we also have   $\nabla_\nu (DA)(h)= D(\nabla_\nu A)(h)$ on $\hat \Sigma$. To see this, we compute, for tangential vectors $e_a, e_b$ to $\Sigma$:
\begin{align}\label{eq:normal}
\begin{split}
	\Big(D(\nabla_\nu A)(h)\Big)(e_a, e_b) &:=\left. \frac{d}{dt}\right|_{t=0} \big( \nabla_{\nu_{g(t)}} A_{g(t)} \big) (e_a, e_b)\\
	&=\Big(\nabla_\nu (DA)(h)\Big) (e_a, e_b) +\big((\nabla_{\nu_{g(t)}})' A\big)(e_a, e_b)\\
	&=\Big(\nabla_\nu (DA)(h)\Big) (e_a, e_b)
\end{split}
\end{align}
where in the the second line $(\nabla_{\nu_{g(t)}})' := \left. \frac{d}{dt}\right|_{t=0}\nabla_{\nu_{g(t)}}$ and one can verify that $\big((\nabla_{\nu_{g(t)}})' A\big)(e_a, e_b)$ because $h=0$ and $\nabla h=0$ on $\hat \Sigma$.

To conclude, we get $\nabla_\nu (DA)(h)= 0$ on $\hat \Sigma$  using \eqref{eq:normal} and the assumption that $D \big( \nabla_\nu A\big)(h) =0$ on $\hat \Sigma$. Covariant differentiating \eqref{eq:sff} in $\nu$, we obtain $(\nabla_\nu^2 h)^\intercal =0$.

\end{proof}

\begin{proposition}\label{pr:v}
Let $\hat{\Sigma}$ be an open subset of $\Sigma$ (can be the entire $\Sigma$). Let $(h, v)\in \C^{2,\alpha}_{-q}(\mathbb R^n\setminus \Omega)$ solve 
\begin{align*}
	&\left \{ \begin{array}{l}-D\Ric(h) + \nabla^2 v = 0\\
	\Delta v=0 \end{array} \right.	 \mbox{ in } \mathbb R^n\setminus \Omega\\
	&\left \{ \begin{array}{l} h^\intercal = 0\\
	DA(h)=0 \\
	D \big( \nabla_\nu A \big) (h) = 0\end{array} \right. \quad \mbox{ on } \hat{\Sigma}.
\end{align*}
Then $v\equiv 0$ and $D\Ric(h)=0$ in $\mathbb R^n\setminus \Omega$.
\end{proposition}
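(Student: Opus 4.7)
My plan is to first reduce $h$ to a geodesic gauge so that Lemma~\ref{le:zero} applies and yields the strong boundary vanishing $k=\nabla k=\nabla^2 k = 0$ on $\hat{\Sigma}$, then use the linearized Einstein equation to extract the relation $\nabla^2 v=0$ on $\hat{\Sigma}$, and finally apply Cauchy-type unique continuation to each coordinate derivative $\partial_i v$ in order to conclude that $\nabla v$ is Euclidean-parallel throughout the connected exterior. The prescribed decay at infinity will then force $v\equiv 0$, and substituting back into the linearized Einstein equation will give $D\Ric(h)=0$.

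For the gauge reduction, I invoke Lemma~\ref{le:geodesic} to choose $V \in \C^{3,\alpha}$ with $V=0$ on $\Sigma$ and $V$ supported in a collar of $\Sigma$, such that $k := h + L_V\bar g$ is in the geodesic gauge on $\Sigma$. Since $D\Ric(L_V\bar g) = L_V\Ric_{\bar g}=0$, the pair $(k,v)$ still solves $-D\Ric(k) + \nabla^2 v=0$ and $\Delta v = 0$. Because $V|_\Sigma = 0$, the flow $\phi_t$ of $V$ satisfies $\phi_t|_\Sigma = \mathrm{id} + O(t^2)$, so $\phi_t$ fixes $\Sigma$ pointwise to first order, and each of the geometric boundary quantities $g^\intercal|_\Sigma$, $A_g|_\Sigma$, $(\nabla_{\nu_g}A_g)|_\Sigma$ is preserved to first order under the one-parameter family $g = \bar g + tL_V\bar g$. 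Therefore $k$ inherits $k^\intercal = 0$, $DA(k) = 0$, and $D(\nabla_\nu A)(k) = 0$ on $\hat{\Sigma}$, and Lemma~\ref{le:zero} gives $k = \nabla k = \nabla^2 k = 0$ on $\hat{\Sigma}$. Since $D\Ric$ is a second-order operator in its argument, this forces $D\Ric(k)\big|_{\hat{\Sigma}} = 0$, and the linearized equation $-D\Ric(k) + \nabla^2 v = 0$ yields $\nabla^2 v = 0$ on $\hat{\Sigma}$.

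For the final unique continuation step, fix a coordinate index $i$ and note that $\partial_i v$ is harmonic on $\mathbb{R}^n\setminus\Omega$. The condition $\nabla^2 v = 0$ on $\hat{\Sigma}$ reads as $X(\partial_i v) = (\nabla^2 v)(X,\partial_i)=0$ on $\hat{\Sigma}$ for every vector $X$, so both the tangential and normal derivatives of $\partial_i v$ vanish on $\hat{\Sigma}$. The hypothesis $\pi_1(\Sigma,\hat{\Sigma})=0$ forces $\hat{\Sigma}$ to be connected, so $\partial_i v$ equals a single constant $c_i$ on $\hat{\Sigma}$, and hence the harmonic function $w_i := \partial_i v - c_i$ has vanishing Cauchy data on the noncharacteristic hypersurface $\hat{\Sigma}$. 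By Holmgren--Calder\'on uniqueness, $w_i$ vanishes in a one-sided neighborhood of $\hat{\Sigma}$ in $\mathbb{R}^n\setminus\Omega$, and Aronszajn's strong unique continuation on the connected exterior then propagates this to $\partial_i v \equiv c_i$ throughout $\mathbb{R}^n\setminus\Omega$. Integrating, $v$ is the restriction of a Euclidean affine function, and the decay $v \in \C^{2,\alpha}_{-q}$ with $q>0$ forces all constants to vanish, so $v\equiv 0$; substituting back gives $D\Ric(h)=\nabla^2 v=0$.

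The most delicate technical point will be the first-order gauge invariance of the third-order quantity $D(\nabla_\nu A)$ under the addition of $L_V\bar g$ with $V|_\Sigma = 0$, since the unit normal depends on the metric and $d\phi_t|_\Sigma$ need not preserve the $\bar g$-normal direction; one must check that diffeomorphism-naturality of $\nabla_{\nu_g}A_g$ combined with $\phi_t|_\Sigma = \mathrm{id}+O(t^2)$ is enough to kill the first-order variation on $\Sigma$. By contrast, the unique continuation step is a standard application of Holmgren-type and Aronszajn-type results once the boundary condition $\nabla^2 v = 0$ on $\hat{\Sigma}$ is secured.
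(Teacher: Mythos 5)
Your proof is correct and follows essentially the same route as the paper's: geodesic gauge via Lemma~\ref{le:geodesic} (the boundary conditions being invariant under adding $L_V\bar g$ with $V|_\Sigma=0$ by naturality, as you note), Lemma~\ref{le:zero} to kill $h$ to second order on $\hat\Sigma$, hence $\nabla^2 v=0$ there since $D\Ric$ is second order, and finally Cauchy uniqueness for the harmonic functions $\partial_i v$ plus decay to force $v\equiv 0$ and $D\Ric(h)=0$. One small correction: Proposition~\ref{pr:v} does not assume $\pi_1(\Sigma,\hat\Sigma)=0$, so rather than invoking that hypothesis to get connectedness you should simply run the unique continuation argument on a single connected component of the open set $\hat\Sigma$, which is all that is needed.
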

\begin{proof}
By Lemma~\ref{le:geodesic}, we may assume $h$ satisfies the geodesic gauge. The boundary conditions and  Lemma~\ref{le:zero} imply that $h=0, \nabla h=0, \nabla^2 h =0$ on $\hat{\Sigma}$. 

Note that $D\Ric(h)$ involves $h$ and its derivatives up to the second order; precisely,  in local coordinates (see, e.g. \cite[Equation (2.1)]{An-Huang:2021}):
\begin{align*}
	(D\Ric|_g(h))_{ij} &=-\tfrac{1}{2} g^{k\ell} h_{ij;k\ell} + \tfrac{1}{2} g^{k\ell} (h_{i k; \ell j} + h_{jk; \ell i} ) - \tfrac{1}{2} (\tr h)_{ij}  \\
	&\quad + \tfrac{1}{2} (R_{i\ell} h^\ell_j + R_{j\ell} h^\ell_i )- R_{ik\ell j} h^{k\ell} \quad \mbox{ for all } i, j =0,1,\dots, n-1.
\end{align*}
Restricting $-D\Ric(h)+\nabla^2 v=0$ on the boundary, we see $\nabla^2 v=0 $ on $\hat{\Sigma}$.

Next, we define the function $f=\frac{\partial v}{\partial x_i }$ for some $i=1,\dots, n$, where $(x_1, \dots, x_n)$ are the Cartesian coordinates.  Since $v$ is harmonic, $f$ is also a harmonic function in $\mathbb R^n\setminus \Omega$. The conditions that $\nabla^2 v=0$ on $\hat{\Sigma}$ imply 
\[
	\nabla^\Sigma f =0\quad \mbox{ and } \quad \nu (f) =0 \qquad \mbox{ on } \hat{\Sigma}.
\]
The first identity implies $f\equiv c$ for some constant $c$ in a connected open subset of $\hat{\Sigma}$. Together with the second identity and uniqueness of Cauchy boundary value for the harmonic equation, we conclude that $f\equiv c$ everywhere in $\mathbb R^n\setminus \Omega$. Since $f\to 0$ by the fall-off rate of $v$, we see that $f=\frac{\partial v}{\partial x_i}$ is identically zero. Since $i$ is arbitrary, we see that $\nabla v=0$  and thus $v$ is constant. Since  $v\to 0 $ at infinity, we conclude that $v$ is identically zero. 
\end{proof}

In the above proposition, we have shown $D\Ric(h)=0$ in $\mathbb {R}^n\setminus \Omega$. Proposition~\ref{pr:Ricci} below generalizes Theorem 2.8 of \cite{An-Huang:2021}, where $\hat{\Sigma}$ was assumed to be the entire boundary $\Sigma$. The key argument is the following extension theorem for $h$-Killing vector fields, that extends the classical result of Nomizu for the case $h$ is identically zero~\cite{Nomizu:1960}.  
\begin{theorem}[{\cite[Theorem 7]{An-Huang:2022}, Cf. \cite[Lemma 2.6]{Anderson:2008}}]\label{th:extension}
Let $(M, g)$ be a connected, analytic Riemannian manifold. Let $h$ be an analytic, symmetric $(0, 2)$-tensor on $M$. Let $U\subset M$ be a connected open subset satisfying $\pi_1(M, U)=0$. Then if $h =L_X g $ in $U$, there is a unique global vector field $Y$ such that $Y= X$ in $U$ and $h=L_{Y} g$ in the whole manifold $M$.
\end{theorem}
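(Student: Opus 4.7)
The plan is to prove Theorem~\ref{th:extension} by prolonging $h = L_X g$ to a determined linear ODE system along curves, and then using analyticity together with the hypothesis $\pi_1(M, U) = 0$ to trivialize the monodromy of this system.

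First I would carry out the standard prolongation of the overdetermined equation. Decompose $\nabla X = \tfrac{1}{2} h + \Phi$, where $\Phi$ is the skew-symmetric part of $\nabla X$. Differentiating this once more and applying the Ricci identity to eliminate $\nabla^2 X$ yields an expression for $\nabla \Phi$ that is algebraic in $X$, $\Phi$, $h$, $\nabla h$, and the Riemann tensor of $g$. Restricted to any piecewise-smooth curve $\gamma \colon [0,1] \to M$, the pair $(X, \Phi)$ thus satisfies a first-order linear ODE along $\gamma$ with analytic coefficients. A routine verification shows that if the initial data at $\gamma(0)$ satisfies the algebraic constraint $\mathrm{sym}(\nabla X) = \tfrac{1}{2} h$, then this constraint is preserved along $\gamma$ by the ODE, so a solution to the ODE is automatically a genuine infinitesimal solution of $L_X g = h$.

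Next I would fix a base point $x_0 \in U$ and, for each $p \in M$ and each piecewise-smooth path $\gamma$ from $x_0$ to $p$, define $Y^{\gamma}$ along $\gamma$ by solving this ODE with initial data $(X(x_0), \Phi(x_0))$ inherited from the given solution on $U$. The resulting local solutions patch together, via normal neighborhoods and the ODE, into an analytic vector field defined on a tubular neighborhood of $\gamma$ that agrees with $X$ near $x_0$. The crucial step is to prove $Y^{\gamma_1}(p) = Y^{\gamma_2}(p)$ for two paths $\gamma_1, \gamma_2$ from $x_0$ to $p$. This is where the hypothesis $\pi_1(M, U) = 0$ enters: by the long exact sequence of the pair $(M, U)$, the induced map $\pi_1(U, x_0) \to \pi_1(M, x_0)$ is surjective, so the loop $\gamma_1 \ast \gamma_2^{-1}$ is homotopic in $M$ (rel $x_0$) to a loop $\sigma$ contained entirely in $U$. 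Along $\sigma$ the ODE solution is literally $X$ itself and therefore returns to its initial value at $x_0$. The monodromy theorem for analytic continuation of solutions to a determined linear ODE with analytic coefficients then propagates this triviality from $\sigma$ to the homotopic loop $\gamma_1 \ast \gamma_2^{-1}$.

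Once path-independence is established, $Y(p) := Y^{\gamma}(p)$ gives a globally defined analytic vector field on $M$ agreeing with $X$ on $U$, and $h = L_Y g$ holds pointwise on $M$ by the constraint-preservation property of the ODE together with the covering of $M$ by tubular neighborhoods of such paths. For uniqueness, if $Y_1, Y_2$ were two such extensions, then $Z := Y_1 - Y_2$ would satisfy $L_Z g = 0$ and vanish on $U$; applying the same prolonged ODE with zero initial data along any path from $U$ to a point of $M$ yields $Z \equiv 0$ by standard ODE uniqueness, with no circular use of the extension theorem itself. The main obstacle, and the one genuinely new ingredient compared with the classical Nomizu setting, is the path-independence step: one must verify that the prolonged ODE has trivial monodromy around every loop in $M$ that, relative to $U$, can be deformed into $U$; the hypothesis $\pi_1(M, U) = 0$ is precisely what converts this into triviality along loops actually contained in $U$, where $X$ is already known to close up.
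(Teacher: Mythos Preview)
The paper does not prove this theorem: it is imported verbatim from \cite[Theorem~7]{An-Huang:2022} (with a pointer to \cite[Lemma~2.6]{Anderson:2008}) and used as a black box inside the proof of Proposition~\ref{pr:Ricci}. So there is no in-paper argument to compare against; your proposal is being measured against the standard Nomizu-type proof that those references carry out.

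On that score your outline is the expected one and is essentially correct: the prolongation of $L_X g=h$ to a closed first-order system for $(X,\Phi)$ is right (the three-index cyclic trick solves for $\nabla\Phi$ exactly as in the Killing case, now with additional $\nabla h$ terms), the reading of $\pi_1(M,U)=0$ as surjectivity of $\pi_1(U)\to\pi_1(M)$ is the correct way to trivialize monodromy, and the uniqueness argument via the homogeneous prolonged ODE is clean.

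One point deserves more care than the phrase ``routine verification'' suggests. Transporting $(X,\Phi)$ along a single curve via the ODE does not by itself produce a vector field on a tubular neighborhood satisfying $L_Yg=h$; the prolonged connection on $TM\oplus\Lambda^2T^*M$ has curvature (built from $\nabla R$ and $\nabla^2 h$), so naive parallel transport is not homotopy-invariant and does not yield a parallel section off the curve. What makes the argument work, and what the cited proofs make explicit, is analyticity: at a point $q$ on $\gamma$ one defines $\tilde X$ on a normal ball $B_q$ by radial ODE transport from $q$, checks that $\tilde X$ is analytic and agrees with the previously constructed solution on the overlap, and then concludes $L_{\tilde X}g-h=0$ on all of $B_q$ because this analytic tensor vanishes on an open subset. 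This is the step that converts the ODE into a genuine local system whose monodromy \emph{is} homotopy-invariant, after which your $\pi_1$ argument goes through. Your phrase ``via normal neighborhoods and the ODE'' gestures at exactly this, but in a write-up you should isolate the analytic-continuation step rather than fold it into constraint preservation along a single curve.
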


\begin{proposition}\label{pr:Ricci}
Let $\hat{\Sigma}$ be an open subset of $\Sigma$ satisfying $\pi_1(\Sigma, \hat{\Sigma})=0$. Let $h\in \C^{2,\alpha}_{-q}(\mathbb R^n\setminus \Omega)$ satisfy
\begin{align*}
	&D\Ric(h)  = 0 \quad 	 \mbox{ in } \mathbb R^n\setminus \Omega\\
	&\left \{ \begin{array}{l} h^\intercal = 0\\
	DA(h)=0 \end{array} \right. \quad \mbox{ on } \hat{\Sigma}.
\end{align*}
Then there is a vector field $X\in \C^{3,\alpha}_{\mathrm{loc}}(\mathbb R^n\setminus \Omega) $ satisfying $X=0$ on $\hat \Sigma$ and $X-K \in \C^{3,\alpha}_{-q}(\mathbb R^n\setminus \Omega)$ for some Euclidean Killing vector field $K$ (possibly zero) such that 
\[
	 h = L_X \bar g  \mbox{ in } \mathbb R^n\setminus \Omega.
\]
Furthermore, if $h^\intercal =0$ and $DH(h)=0$ everywhere on $\Sigma$, then $X=0$ everywhere on $\Sigma$ and thus $DA(h)=0$ on $\Sigma$. 
\end{proposition}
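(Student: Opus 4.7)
The plan decomposes into three stages: a local construction near $\hat\Sigma$, a global extension via Theorem~\ref{th:extension}, and the separate verification of the ``furthermore'' claim.

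For the local construction, by Lemma~\ref{le:geodesic} I may first absorb a gauge so that $h$ satisfies the geodesic gauge on $\Sigma$; this change is itself a Lie derivative and can be reabsorbed into the final $X$, while preserving the boundary conditions on $\hat\Sigma$ (since the adjusting vector field vanishes on $\Sigma$). Then Lemma~\ref{le:zero}, used without the $D(\nabla_\nu A)$ condition, applied on $\hat\Sigma$ with $h^\intercal = 0$ and $DA(h) = 0$, gives $h = 0$ and $\nabla h = 0$ on $\hat\Sigma$. Integrating the Lie equation $L_X\bar g = h$ along normal geodesics emanating from $\hat\Sigma$, with initial data $X_1 = 0$ on $\hat\Sigma$, produces a vector field $X_1$ on a collar neighborhood $U$ of $\hat\Sigma$ in $\mathbb R^n\setminus\Omega$ with $X_1 = 0$ on $\hat\Sigma$ and $h = L_{X_1}\bar g$ on $U$. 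This is the local-in-$\hat\Sigma$ analogue of the first part of the proof of Theorem~2.8 in \cite{An-Huang:2021}.

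For the global extension, I apply Theorem~\ref{th:extension} on the interior of $\mathbb R^n\setminus\Omega$, which is analytic; the tensor $h$ is analytic there by elliptic regularity for the analytic-coefficient system $D\Ric(h) = 0$. The topological hypothesis of Theorem~\ref{th:extension} follows by composing the surjections $\pi_1(\hat\Sigma) \to \pi_1(\Sigma) \to \pi_1(\mathbb R^n\setminus\Omega)$, using $\pi_1(\Sigma, \hat\Sigma) = 0$ (hypothesis) together with $\pi_1(\mathbb R^n\setminus\Omega, \Sigma) = 0$ (observed in the paragraph following Definition~\ref{def}) and the deformation retractions $U \simeq \hat\Sigma$ and $\mathbb R^n\setminus\Omega \simeq \Sigma$. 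Theorem~\ref{th:extension} then extends $X_1$ to a global vector field $X$ on $\mathbb R^n\setminus\Omega$ with $h = L_X\bar g$ everywhere; regularity $X \in \C^{3,\alpha}_{\mathrm{loc}}$ up to $\Sigma$ follows from elliptic regularity for the overdetermined Lie system together with $h \in \C^{2,\alpha}_{-q}$. Asymptotically, $L_X\bar g = h \in \C^{2,\alpha}_{-q}$ combined with the standard analysis of nearly-Killing vector fields yields $X - K \in \C^{3,\alpha}_{-q}$ for some Euclidean Killing field $K$.

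For the furthermore part, assume $h^\intercal = 0$ and $DH(h) = 0$ on all of $\Sigma$; I already have $X = 0$ on $\hat\Sigma$ from the main construction. Writing $X|_\Sigma = X^T + f\nu$ with $f = \bar g(X, \nu)$, the condition $h^\intercal = (L_X\bar g)^\intercal = 0$ on $\Sigma$ translates to $L_{X^T}\bar g^\intercal = 2 f A$, and $DH(h) = 0$ supplies one further scalar relation among $f$, $X^T$, and their first derivatives. Together these form a coupled linear system on $\Sigma$ for $(X^T, f)$, and a unique-continuation argument anchored by the vanishing of $X$ on the open subset $\hat\Sigma$ should force $X^T \equiv 0$ and $f \equiv 0$ throughout $\Sigma$. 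Once $X = 0$ on $\Sigma$, formula~\eqref{eq:sff} applied to $h = L_X\bar g$ immediately gives $DA(h) = 0$ on $\Sigma$. The main obstacle lies in this last step: since $\Sigma$ is only smooth, interior analyticity of $X$ does not propagate along $\Sigma$ via Cauchy--Kowalewski, so one must either carefully set up unique continuation in a one-sided interior collar of $\Sigma$ (combining the analytic interior propagation with the two boundary equations) or propagate the stronger conclusion $DA(h) = 0$ directly from $\hat\Sigma$ across $\Sigma\setminus\hat\Sigma$ via an ODE along tangential paths.
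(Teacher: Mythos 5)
Your outline goes wrong at the ``local construction'' stage, and the error propagates. The equation $L_{X_1}\bar g = h$ is an overdetermined first-order system ($\tfrac{n(n+1)}{2}$ equations for $n$ unknown components), and integrating some component of it along normal geodesics from $\hat\Sigma$ with initial data $X_1=0$ does not produce a vector field whose \emph{full} Lie derivative equals $h$: the integrability obstruction is the linearized curvature $D\mathrm{Rm}(h)$, and $D\Ric(h)=0$ does not force it to vanish (a linearized gravitational-wave deformation is Ricci-flat to first order but is not locally a Lie derivative). Knowing $h=0$ and $\nabla h=0$ on $\hat\Sigma$ does not repair this. In effect you are assuming the conclusion on a collar, which is exactly the nontrivial content of the proposition. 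A second, related gap: $h$ is \emph{not} analytic in the interior ``by elliptic regularity for $D\Ric(h)=0$,'' because $D\Ric$ is not elliptic (diffeomorphism invariance: $L_V\bar g$ solves $D\Ric(h)=0$ for any $C^3$, non-analytic $V$). So the input to Theorem~\ref{th:extension} --- an analytic tensor that is a Lie derivative on an open set --- is not available by the route you describe.

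The paper's proof supplies both missing ingredients at once. One extends $h$ by zero across $\hat\Sigma$ into a small region $U\subset\Omega$ (legitimate as a weak solution precisely because Lemma~\ref{le:zero} gives $h=\nabla h=0$ on $\hat\Sigma$ in the geodesic gauge), then solves $\Delta Z=\beta k$ with $Z=0$ on $\partial\hat M$ to put $k+L_Z\bar g$ in the Bianchi gauge; gauged, the equation $D\Ric(h)=0$ becomes $\Delta(k+L_Z\bar g)=0$ weakly, so Weyl's lemma yields analyticity of the \emph{gauged} tensor. The open set on which this tensor is manifestly a Lie derivative is not a collar on the exterior side but the artificially attached region $U$, where $k\equiv 0$ and hence $k+L_Z\bar g=L_Z\bar g$; Theorem~\ref{th:extension} then extends $Z$ to a global $Y$ with $k+L_Z\bar g=L_Y\bar g$, and $X=Y-Z$ does the job. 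Your treatment of the topology (composing $\pi_1(\Sigma,\hat\Sigma)=0$ with $\pi_1(\mathbb R^n\setminus\Omega,\Sigma)=0$), of the asymptotics, and of the ``furthermore'' clause (the linear system for $(X^\intercal,f)$ on $\Sigma$ and unique continuation from $\hat\Sigma$) is consistent with the paper's sketch, but the core of the argument as proposed does not stand.
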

\begin{proof}
We may without loss of generality assume that $h$ satisfies the geodesic gauge on $\Sigma$. We extend $h$ by $0$ across $\hat{\Sigma}$ into some small open subset $U\subset \Omega$ so that the ``extended'' manifold $\hat M = (\mathbb R^n \setminus \Omega)\cup \overline{U}$ has smooth embedded boundary $\partial \hat M$ and $\pi_1(\hat M, U)=0$. Denote the extension of $h$ by~$k\in \C^1_{\mathrm{loc}}(\hat M)$:
\[
 	k = \left\{ \begin{array}{ll} h  & \mbox{ in } \mathbb R^n\setminus \Omega \\
	0 &\mbox{ in } U\end{array} \right.
\]	 
Let $Z\in \C^{1,\alpha}_{1-q}(\hat M)$ be a vector field that weakly solves $\Delta Z = \beta k$ in $\hat M$ with $Z=0$ on $\partial \hat M$ where the Bianchi operator $\beta k = -\Div_{\bar g} k + \frac{1}{2} d(\mathrm{tr}_{\bar g} k)$. Or equivalently, $k+L_Z \bar g$ weakly solves $\beta (k+L_Z \bar g)  =0$ in $\hat M$. Together with the assumption that $D\Ric(h)=0$ in $\mathbb R^n\setminus \Omega$ and the boundary condition $h=0, \nabla h=0$ on $\Sigma$, we have that $k+L_Z \bar g$ is a weak solution to $\Delta (k+L_Z \bar g )=0$ in $\hat M$. So far, the argument has followed closely \cite[Theorem 2.8]{An-Huang:2021}, to which we refer the analytic details. 

But in the current setting we cannot conclude $k+L_Z\bar g$ is identically zero as in \cite[Theorem 2.8]{An-Huang:2021}. (In \cite{An-Huang:2021}, it was possible to extend the harmonic $k+L_X\bar g$ globally on the entire $\mathbb R^n$.) Here we apply Weyl's lemma to see that $k+L_Z \bar g$ is analytic in $\Int (\hat M)$. Since $k+L_Z \bar g=L_Z \bar g$ in $U$ (remember $k\equiv 0$ there), by Theorem~\ref{th:extension}, there is a unique vector field $Y$ such that $Y=Z$ in $U$ and 
\[
	k +L_Z \bar g = L_Y \bar g \mbox{ in } \hat M.
\]
To summarize, we obtain $X = Y-Z$ with   $X= 0 $ on $\hat{\Sigma}$ and 
\[
	h = L_X\bar g \quad \mbox{ in } \mathbb R^n\setminus \Omega.
\]
Note that $X\in \C^{3,\alpha}_{\mathrm{loc}}(\mathbb R\setminus \Omega)$ because of the regularity $h$.

The rest of the conclusions follow from basic arguments as in \cite{An-Huang:2022}, so we just give a sketch below.  To show the desired asymptotic of $X$ toward infinity, one first considers the ODE for $X$ along any ray to  infinity to show that $X= o(|x|^2)$. Then writing the equation $D\Ric(L_X\bar g)=0$ in the harmonic gauge gives a harmonic expansion for $X$. Thus $X$ is asymptotic to a Euclidean Killing vector field $K$, using the fall-off rate $L_X \bar g = h\in \C^{2,\alpha}_{-q}$. 

Lastly, to show that $X=0$ on $\Sigma$ under the added assumptions $h^\intercal=0$ and $DH(h)=0$ on $\Sigma$, we write $X = \eta \nu + X^\intercal$, where $X^\intercal$ is tangential to $\Sigma$. The assumptions $h^\intercal =0$ and $DH(h)=0$ on $\Sigma$ imply $\eta, X^\intercal$ satisfies a  linear PDE system on $\Sigma$. Since $\eta, X^\intercal$ are identically zero on $\hat \Sigma$, by unique continuation, they are identically zero everywhere on $\Sigma$. 

\end{proof}

\begin{proof}[Proof of Theorem~\ref{th:trivial}]
Let $(h, v)$ be as in the statement of Theorem~\ref{th:trivial}. By Proposition~\ref{pr:v}, $v\equiv 0$ in $\mathbb R^n\setminus \Omega$, and thus $h$ satisfies the assumptions in Proposition~\ref{pr:Ricci}, which implies the desired conclusion. 
\end{proof}

\section{A smooth one-sided family of hypersurfaces}\label{se:perturbed}

Let $\delta>0$ and let $\Omega_t\subset \mathbb R^n$, $t\in [-\delta, \delta]$, be bounded open subsets such that their boundaries $\Sigma_t$ are connected, embedded hypersurfaces and $\{ \Sigma_t\}$ form a smooth one-sided family foliating along $\hat {\Sigma}_t\subset \Sigma_t$ with relatively simply-connected $\Sigma_t\setminus \hat{\Sigma}_t$. Namely, their deformation vector $X$ is smooth and on each $\Sigma_t$, $\bar g( X, \nu) =\zeta \ge 0$ with $\zeta>0$ in a dense subset  of $\hat{\Sigma}_t \subset \Sigma_t$ satisfying $\pi_1(\Sigma_t, \hat{\Sigma}_t)=0$. Let $\psi_t: \mathbb R^n\setminus \Omega_t \to \mathbb R^n$ be the flow of $X$. Let us denote $\Omega = \Omega_0,  \Sigma = \Sigma_0, $ and $\hat{\Sigma} = \hat{\Sigma}_0$. Then $\Omega_t = \psi_t(\Omega),  \Sigma_t = \psi_t (\Sigma)$. Denote by $g_t = \psi_t^* (\bar  g|_{\mathbb R^n\setminus \Omega_t})$ the pull-back metric defined on $\mathbb {R}^n\setminus \Omega$. We also note $g_0 = \bar g$.  

Let us define a family of linear operators, with respect to $g_t$, as 
\[
L_t: \C^{2,\alpha}_{-q}(\mathbb R^n\setminus \Omega) \to \C^{0,\alpha}_{-q-2} (\mathbb R^n\setminus \Omega) \times \mathcal B(\Sigma)
\]
\begin{align*}
	L_t(h, v) =\begin{array}{l}\left\{ \begin{array}{ll} -D\Ric|_{g_t}(h) + \nabla^2_{g_t} v \\
	\Delta_{g_t} v \end{array} \right. \quad \mbox{ in } \mathbb R^n\setminus \Omega\\
	\left\{ \begin{array}{ll} h^\intercal \\ 
	DH|_{g_t}(h) \end{array}\right.\quad \mbox{ on } \Sigma
	\end{array}.
\end{align*}
Here, $\mathcal B(\Sigma)=  \C^{2,\alpha}(\Sigma)\times \C^{1,\alpha}(\Sigma)$ is the function space for the boundary operator. Note that each $L_t $ is the pull-back of the operator corresponding to the boundary value problem \eqref{eq:static-bdry} in $\mathbb R^n\setminus \Omega_t$.

In \cite{An-Huang:2021}, we observed that the kernel spaces $\Ker L_t$ have the following properties. 
\begin{proposition}[Cf. {\cite[Proposition 6.6]{An-Huang:2021}}]\label{pr:limit}
There is an open dense subset $J\subset (-\delta, \delta)$ such that for every $a\in J$ and every $(h, v) \in \Ker L_a$, there is a sequence $\{ t_j\}$ in $J$ such that $t_j\searrow a$, $(h(t_j), v(t_j))\in \Ker L_{t_j}$ and $(p, z)\in \C^{2,\alpha}_{-q}(\mathbb R^n\setminus \Omega)$ such that, as $t_j \searrow a$, 
\begin{align*}
	\big( h(t_j), v(t_j) \big) &\to (h, v)\\
	\frac{\big( h(t_j), v(t_j) \big) - (h,v) }{t_j - a} &\to (p, z)
\end{align*}
where both convergence are taken in the $\C^{2,\alpha}_{-q}(\mathbb R^n\setminus \Omega)$-norm. 
\end{proposition}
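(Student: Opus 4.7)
The plan is to apply standard Fredholm perturbation theory to the smooth one-parameter family $\{L_t\}$. Each $L_t$ is Fredholm of index zero between the weighted H\"older spaces (as already established in \cite{An-Huang:2021}), and $t \mapsto L_t$ is smooth in operator norm since it depends smoothly on $g_t = \psi_t^* \bar g$ and the induced boundary geometry. Consequently, $n(t) := \dim \Ker L_t$ is upper semi-continuous on $(-\delta,\delta)$. I would then define $J \subset (-\delta,\delta)$ to be the set of points at which $n$ is locally constant. Then $J$ is open by definition; it is also dense, because on any non-empty open subinterval $I$ the integer-valued function $n|_I$ attains a minimum $m$, and $\{t \in I : n(t) = m\} = \{t \in I : n(t) \le m\}$ is open in $I$ by upper semi-continuity, so every point of this non-empty open set lies in $J$.

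Fix $a \in J$ and choose $\eta>0$ with $n(t) \equiv n(a)$ for $|t-a|<\eta$. Since the Fredholm index is constant and $\dim \Ker L_t$ is constant on this interval, the cokernel dimension of $L_t$ is also constant there. Denote by $\mathcal Y$ the common target space of $L_t$. I would pick a closed topological complement $W$ of $\Ker L_a$ in the domain and a finite-dimensional subspace $V \subset \mathcal Y$ complementary to $\Range(L_a)$; by a continuity argument exploiting the constancy of cokernel dimension, $V$ remains complementary to $\Range(L_t)$ for all $t$ in a possibly smaller neighborhood of $a$. Given $(h,v) \in \Ker L_a$, I would solve $L_t((h,v)+w(t)) = 0$ for $w(t) \in W$ via the implicit function theorem: the quotient map $W \to \mathcal Y / V$ induced by $L_t$ is an isomorphism at $t=a$ and hence for $t$ near $a$, while the complementarity $V \cap \Range(L_t) = \{0\}$ promotes a projected solution in $\mathcal Y/V$ to an exact solution in $\mathcal Y$. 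This yields a smooth (in $t$) family $(h(t),v(t)) := (h,v) + w(t) \in \Ker L_t$ with $(h(a),v(a)) = (h,v)$.

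Finally, using density of $J$, pick any sequence $t_j \searrow a$ with $t_j \in J$. Smoothness of $(h(t),v(t))$ in $t$ then gives $(h(t_j), v(t_j)) \to (h,v)$ and the difference quotients converge to $(p,z) := \left.\tfrac{d}{dt}\right|_{t=a} (h(t),v(t))$, both convergences holding in the $\C^{2,\alpha}_{-q}$-norm. The main technical obstacle I expect is rigorously verifying the smooth kernel bundle structure near $a$ — specifically, showing that a single finite-dimensional $V$ complements $\Range(L_t)$ uniformly for $t$ in a neighborhood of $a$. This requires carefully combining the constancy of the Fredholm index with the constancy of $\dim \Ker L_t$ and a continuity argument on finite-dimensional complements in the Banach space setting, rather than just a pointwise cokernel dimension count.
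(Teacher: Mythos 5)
Your overall strategy --- upper semi-continuity of the kernel dimension of a smooth Fredholm family, hence local constancy on an open dense set $J$, followed by an implicit-function-theorem construction of a smooth local kernel bundle --- is exactly the skeleton of the argument in \cite[Proposition 6.6]{An-Huang:2021} that the paper invokes. However, there is a genuine gap at your very first step: the operator $L_t$ as defined in this paper is \emph{not} Fredholm, and Fredholmness of $L_t$ is not what is established in \cite{An-Huang:2021}. The interior operator $(h,v)\mapsto(-D\Ric(h)+\nabla^2 v,\ \Delta v)$ is degenerate because of diffeomorphism invariance: for every vector field $W\in \C^{3,\alpha}_{1-q}$ vanishing on $\Sigma$, the pair $(L_W g_t,0)$ lies in $\Ker L_t$, since $D\Ric|_{g_t}(L_W g_t)=L_W \Ric_{g_t}=0$ and the boundary operators $h^\intercal$, $DH(h)$ annihilate deformations generated by diffeomorphisms fixing $\Sigma$. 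Hence $\Ker L_t$ is infinite dimensional for every $t$, the integer $n(t)$ you introduce is not defined, and the semicontinuity / locally-constant-dimension / implicit-function-theorem machinery cannot be run on $L_t$ directly.

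What \cite{An-Huang:2021} actually proves Fredholmness (and the analogue of this proposition) for is the \emph{gauged} operator, obtained by adding the static-harmonic gauge terms to the interior equations and modifying the boundary operator accordingly; the remark following the proposition in the present paper says explicitly that Proposition 6.6 is proved for the kernels of those gauged operators and then ``extends directly'' to $\Ker L_t$. Your argument would go through essentially verbatim for the gauged family, whose kernels are finite dimensional. To recover the stated proposition you then need the missing transfer step: given $(h,v)\in\Ker L_a$, split it into a gauged kernel element plus a pure-gauge piece $(L_W g_a,0)$ with $W=0$ on $\Sigma$, run your construction on the gauged part, and add back $(L_W g_{t_j},0)\in\Ker L_{t_j}$, whose difference quotients also converge in $\C^{2,\alpha}_{-q}$ because $t\mapsto g_t=\psi_t^*\bar g$ is smooth. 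Without first passing to the gauge-fixed operator, the proof does not start.
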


\begin{remark}
In \cite[Proposition 6.6]{An-Huang:2021}, we actually proved the statement for the kernel of the corresponding ``gauged'' operators, which extends directly  to the above statement. 
\end{remark}

\begin{theorem}\label{th:normal}
Let $J\subset (-\delta,\delta)$ be the open dense subset as in Proposition~\ref{pr:limit}. Then for ever $a\in J$ and every $(h, v)\in \Ker L_a$, ,we have 
\[
	DA|_{g_a}(h) = 0 \quad \mbox{ and } \quad D(\nabla_\nu A)|_{g_a}(h) = 0 \qquad \mbox{ on }\quad \Sigma^{+}_a
\]
where $\Sigma^+_a= \{ x\in \Sigma: \psi_a^*( \zeta|_{\Sigma_a} )(x)>0\}$. (In other words, $\psi_a(\Sigma^+_a)$ is the subset of $\Sigma_a$ on which $\zeta>0$.)
\end{theorem}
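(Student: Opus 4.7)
The plan is to combine the variational data supplied by Proposition~\ref{pr:limit} with a Taylor expansion of the boundary conditions, exploiting the proper foliation of a neighborhood of $\Sigma_a^+$ by the leaves $\{\Sigma_t\}$. Fix $a\in J$ and $(h, v)\in \ker L_a$, and let $(h(t_j), v(t_j))\in \ker L_{t_j}$ be as in Proposition~\ref{pr:limit}, with $t_j\searrow a$, limit $(h,v)$, and difference-quotient limit $(p,z)\in \C^{2,\alpha}_{-q}$. Pushing forward, $\tilde h(t):=(\psi_t)_* h(t)$ satisfies the static linearization on $\mathbb R^n\setminus \Omega_t$ with respect to $\bar g$, together with $\tilde h(t)^\intercal=0$ and $DH|_{\bar g}(\tilde h(t))=0$ on $\Sigma_t$. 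By Lemma~\ref{le:geodesic} we may assume without loss of generality that $\tilde h_a:=(\psi_a)_* h$ satisfies the geodesic gauge on $\Sigma_a$; then \eqref{eq:sff} and \eqref{eq:normal} reduce the two desired conclusions to $(\nabla_{\nu_a}\tilde h_a)^\intercal=0$ and $(\nabla_{\nu_a}^2\tilde h_a)^\intercal=0$ on $\Sigma_a^+$ (pulled back to $\Sigma$ via $\psi_a$).

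To obtain $DA(h)=0$ on $\Sigma_a^+$, we combine two identities. First, $h(t_j)^\intercal=0$ on $\Sigma$ passes, via the difference-quotient limit, to $p^\intercal=0$ on $\Sigma$; translating through the pushforward relation $\tilde p=(\psi_a)_* p + L_X\tilde h_a$ and computing $(L_X\tilde h_a)^\intercal|_{\Sigma_a}$ under the geodesic gauge, only the normal-derivative contribution survives, yielding $\tilde p^\intercal|_{\Sigma_a}=\zeta(\nabla_{\nu_a}\tilde h_a)^\intercal$. Second, near any $x_0\in \Sigma_a^+$ the leaves $\Sigma_t$ form a proper foliation for $t$ near $a$; parameterizing by $\gamma(t)=\psi_t(y_0)$ and tangent frames by $E_i(t)=(d\psi_t)(e_i)$, differentiating the identity $\tilde h(t)^\intercal(E_i(t), E_j(t))|_{\gamma(t)}=0$ in $t$ at $t=a$ and using the geodesic gauge to discard the $\dot E_i(a), \dot E_j(a)$ correction terms (which all involve $\tilde h_a(\cdot,\cdot)|_{\Sigma_a}=0$) gives $\tilde p^\intercal+\zeta(\nabla_{\nu_a}\tilde h_a)^\intercal=0$ on $\Sigma_a^+$. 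Subtracting yields $(\nabla_{\nu_a}\tilde h_a)^\intercal=0$ on $\Sigma_a^+$, which pulls back to $DA|_{g_a}(h)=0$ there.

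For $D(\nabla_\nu A)(h)=0$ on $\Sigma_a^+$, once $DA(\tilde h_a)=0$ and $\tilde h_a^\intercal=0$ are known there, the first two identities of Lemma~\ref{le:zero} (applied with $\hat\Sigma=\Sigma_a^+$) give $\tilde h_a=0$ and $\nabla\tilde h_a=0$ on $\Sigma_a^+$. Under these vanishings, the interior equation $-D\Ric(\tilde h_a)+\nabla^2\tilde v_a=0$ restricted to $\Sigma_a^+$ reduces to a linear relation among the second normal derivatives of $\tilde h_a$ and the Hessian $\nabla^2\tilde v_a$. Running the same first-order Taylor expansion with the second boundary condition $DH|_{\bar g}(\tilde h(t_j))|_{\Sigma_{t_j}}=0$ produces an identity relating $DH(\tilde p)$ to $\nabla_{\nu_a}[DH(\tilde h_a)]$, and the latter is again governed by the second normal derivative of $\tilde h_a$. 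Combining these two relations with $\Delta\tilde v_a=0$ and the gauge decomposition $\tilde p=(\psi_a)_* p+L_X\tilde h_a$ extracts $(\nabla_{\nu_a}^2\tilde h_a)^\intercal=0$ on $\Sigma_a^+$.

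The hard part will be this last step. Proposition~\ref{pr:limit} supplies only a first-order $t$-variation $(p,z)$, so the second-order normal-derivative conclusion cannot be read off from a single Taylor expansion and instead demands a careful interplay among the two boundary conditions, the interior static linearization, and the bookkeeping of $\tilde p=(\psi_a)_* p+L_X\tilde h_a$ that separates the diffeomorphism-type variation from the genuine variation of $\tilde h_a$. Arranging the geodesic gauge compatibly along the whole family $\{\tilde h(t_j)\}$ while preserving the limit and difference-quotient structure is another delicate point.
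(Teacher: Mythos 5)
There is a genuine gap in the first (and foundational) step. Your two ``identities'' for $DA(h)$ on $\Sigma_a^+$ are not independent: differentiating $\tilde h(t)^\intercal(E_i(t),E_j(t))|_{\gamma(t)}=0$ in $t$ along the moving leaves is, by the very definition of the pullback $h(t)=\psi_t^*\tilde h(t)$, exactly the same computation as differentiating $h(t_j)^\intercal=0$ on the fixed boundary $\Sigma$. Both yield the single relation $(p-L_Xh)^\intercal=-2\zeta\,DA(h)$ (the first line of \eqref{eq:boundary}, after using \eqref{eq:sff} and the geodesic gauge to write $(L_Xh)^\intercal=\zeta(\nabla_\nu h)^\intercal=2\zeta\,DA(h)$). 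Subtracting them therefore produces a tautology, not $(\nabla_\nu h)^\intercal=0$. Indeed no amount of differentiating the Dirichlet-type boundary condition alone can force $DA(h)=0$; the Taylor expansion only tells you what the boundary data of the variation $(k,w):=(p-L_Xh,\,z-X(v))$ \emph{is}, namely $\big(-2\zeta\,DA(h),\,\zeta A\cdot DA(h)\big)$. The missing ingredient is the Green-type identity of \cite[Corollary 3.5]{An-Huang:2021}: pairing the static vacuum deformation $(h,v)$ (which has vanishing Cauchy-type data $h^\intercal=0$, $DH(h)=0$) against the static vacuum deformation $(k,w)$ with the above boundary data yields
\begin{equation*}
0=\int_\Sigma \Big\langle \big(vA+DA(h)-\nu(v)\bar g^\intercal,\,2v\big),\big(-2\zeta\,DA(h),\,\zeta A\cdot DA(h)\big)\Big\rangle\,\da=-\int_\Sigma 2\zeta\,|DA(h)|^2\,\da,
\end{equation*}
and only this integral identity forces $DA(h)=0$ where $\zeta>0$. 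Without it your argument never closes.

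The second step also needs a different mechanism than the one you sketch. You correctly sense that a single first-order variation $(p,z)$ cannot directly produce a second normal derivative, but the way out is not to mine the interior equation and the $DH$ condition for $(\nabla_\nu^2 h)^\intercal$; it is a bootstrap. Once $DA(h)=0$ on $\Sigma^+$, the boundary data \eqref{eq:boundary} of $(k,w)$ vanish identically on all of $\Sigma$, so $(k,w)\in\Ker L$ and the first conclusion applies to it: $DA(k)=0$ on $\Sigma^+$. A limit computation (using $DA|_{g_{t_j}}(h(t_j))=0$ on $\Sigma^+_{t_j}$ and $DA(h)=0$ on $\Sigma^+$) identifies $DA(k)=DA(p-L_Xh)=-\zeta\,\nabla_\nu\big(DA(h)\big)$ on $\Sigma^+$, whence $\nabla_\nu(DA(h))=0$ there, and \eqref{eq:normal} converts this into $D(\nabla_\nu A)(h)=0$. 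Your reduction of the conclusions to $(\nabla_\nu h)^\intercal$ and $(\nabla_\nu^2 h)^\intercal$ via the geodesic gauge is fine as bookkeeping, but the proposal as written does not contain the two ideas that actually drive the proof.
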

\begin{proof}
By re-paramerizing, we may assume $a=0$ and hence $g_a = \bar g, \Sigma_a = \Sigma$, and we denote by $L_a = L$ and $\Sigma_a^+= \Sigma^+$. We may also without loss of generality assume that $h$ satisfies the geodesic gauge. As proven in \cite[Theorem 7 and Theorem 7$^\prime$]{An-Huang:2021},  $\big(p-L_X h, z - X(v)\big)$ is a static vacuum deformation in $\mathbb R^n\setminus \Omega$ satisfying the boundary conditions on $\Sigma$:
\begin{align}\label{eq:boundary}
\begin{split}
	(p-L_X h )^\intercal &= -2\zeta DA(h)\\
	DH(p-L_X h) &=\zeta  A\cdot DA(h).
\end{split}
\end{align}
 
Recall a consequence of the Green-type identity from \cite[Corollary 3.5]{An-Huang:2021}: If both $(h, v), (k, w)\in \C^{2,\alpha}_{-q}(\mathbb R^n\setminus \Omega)$ are static vacuum deformations at $(\bar g,1)$ and $h$ satisfies $h^\intercal=0, DH(h)=0$ on $\Sigma$, then 
\begin{align*}
	\int_\Sigma  \Big\langle \big(vA + DA(h) -\nu(v) \bar{g}^\intercal, 2v \big), \big(k^\intercal, DH(k)\big) \Big\rangle \, \da=0.
\end{align*}
We apply the previous identity by substituting $(k, w) := \big(p-L_X h, z - X(v)\big)$ and using the boundary conditions \eqref{eq:boundary} to obtain 
\begin{align*}
	0&=\int_\Sigma  \Big\langle \big(vA + DA(h) -\nu(v) \bar{g}^\intercal, 2v \big), \big(-2\zeta DA(h), \zeta A\cdot DA(h) \big) \Big\rangle \, \da\\
	&=- \int_\Sigma 2\zeta |DA(h)|^2 \, \da
\end{align*}
where we compute $\bar g^\intercal \cdot DA(h)=0$ to get the last identity. Thus, we show that $DA(h)=0$ on $\Sigma^+$. 

To summarize our argument, we have shown that for any $a\in J$ and for any $(h, v)\in \Ker L_{a}$, we must have $DA|_{g_a}(h)=0$ on $\Sigma^+_a$.

Applying $DA(h)=0$ on $\Sigma^+$ to \eqref{eq:boundary}, the static vacuum deformation $(k, w)=\big(p-L_X h, z - X(v)\big)$ defined earlier satisfies $k^\intercal = 0$ and $DH(k)=0$ everywhere on $\Sigma$. In particular,  $(k, w)\in \Ker L$, and thus $DA(k)=0$ on $\Sigma^+$. We show that $DA(k)= \nabla_\nu (DA(h))$ on $\Sigma^+$: Using $DA(k) = DA(p-L_X h)$ and 

\[
p-L_X h = \lim_{t_j\to 0}\frac{1}{t_j}(h(t_j)-h)-\lim_{t_j\to 0}\frac{1}{t_j}(\psi_{t_j}^*h-h)=\lim_{t_j\to 0}\frac{1}{t_j}\big(h(t_j)-\psi_{t_j}^*h\big),
\]
we compute on $\Sigma^+$:
\begin{align*}
	0=DA(p-L_Xh) &=DA\left(\lim_{t_j\to 0}\tfrac{1}{t_j}\big(h(t_j)-\psi_{t_j}^*h\big)\right)\\
	&=\lim_{t_j\to 0}\tfrac{1}{t_j} DA|_{g_{t_j}}(h(t_j)-\psi_{t_j}^*h)\\
	&=-\lim_{t_j\to 0} \tfrac{1}{t_j}DA|_{g_{t_j}}(\psi_{t_j}^*h)\\
	&=-\lim_{t_j\to 0} \tfrac{1}{t_j} \psi_t^*\big(DA(h)\big|_{\Si_{t_j}}\big)\\
	&  =-L_X\big(DA(h)\big)\\
	&=-\zeta \nabla_{\nu}\big(DA(h)\big).
\end{align*}
where in the second equality we use $h(t_j)-\psi_{t_j}^* h=0$ when $t_j=0$, in the third equality we use $DA|_{g_{t_j}}\big(h(t_j)\big)=0$ on $\Si^+_{t_j}$ because $\big(h(t_j),v(t_j)\big)\in \Ker L_{t_j}$ and  $\Sigma^+_{t_j}\to \Sigma^+$ as $t_j\to 0$, and in the last equality we use $DA(h)=0$ on $\Si^+$.

%In the third line above, we use the fact that $\big(h(t_j),v(t_j)\big)\in \Ker L_{t_j}$ so $DA|_{g_{t_j}}\big(h(t_j)\big)=0$ on $\Si_{t_j}^+$. Since the deformation vector field $X$ is smooth, $\Si_{t_j}^+\to\Si^+$ as $t_j\to 0$. Thus for any $p\in\Si^+$ there is $\epsilon>0$ such that, for all $t_j<\epsilon$, $DA|_{g_{t_j}}\big(h(t_j)\big)=0$ at $p$. In the last line above, we apply the definition of Lie derivative and use the fact that $DA(h)=0$ on $\Si^+$.

To conclude the proof, we computed as in \eqref{eq:normal} to get
\[
	 D (\nabla_\nu A)(h) = \nabla_\nu (DA(h)) = 0 \quad \mbox{ on } \Sigma^+.
\]
 
\end{proof}

\begin{proof}[Proof of Theorem~\ref{th:one-sided}]
Let $\{ \Sigma_t \}$, $t\in [-\delta, \delta]$,  be given as in the theorem. Let $J$ be the open dense subset of $J$ from Proposition~\ref{pr:limit}. We will show that, for any $a\in J$,  $\Sigma_a$ is static regular in $\mathbb R^n\setminus \Omega_a$. Let $(h, v)\in \Ker L_a$. We apply Theorem~\ref{th:normal}  to see that $DA|_{g_a}(h)=0$ and $D(\nabla_\nu A)|_{g_a}(h)=0$ on $\Sigma^+$ and hence on $\hat \Sigma$. Then we can apply Theorem~\ref{th:trivial} to conclude that $DA(h)=0$ on the entire $\Sigma$. It completes the proof. 
\end{proof}

\subsection*{Acknowledgement}

The second author was partially supported by the NSF CAREER Award DMS-1452477 and NSF DMS-2005588.

%\bibliographystyle{amsplain}
%\bibliography{2020}

\providecommand{\bysame}{\leavevmode\hbox to3em{\hrulefill}\thinspace}
\providecommand{\MR}{\relax\ifhmode\unskip\space\fi MR }
% \MRhref is called by the amsart/book/proc definition of \MR.
\providecommand{\MRhref}[2]{%
  \href{http://www.ams.org/mathscinet-getitem?mr=#1}{#2}
}
\providecommand{\href}[2]{#2}

\end{document}